\documentclass[11pt]{article}
\usepackage[margin=1in]{geometry}

\usepackage{amsmath}
\usepackage{amsthm}
\usepackage{amssymb}
\usepackage{comment}
\usepackage{dsfont}
\usepackage{enumerate}
\usepackage{tikz}
\usepackage{hyperref}
\usepackage[capitalize]{cleveref}
\usepackage{setspace}
\usepackage{graphicx}
\usepackage{subcaption}
\usepackage[normalem]{ulem}

\usepackage{algpseudocode, algorithm}
\newcommand*\Let[2]{\State #1 $\gets$ #2}
\algrenewcommand\algorithmicrequire{\textbf{Given:}}
\algrenewcommand\algorithmicensure{\textbf{Return:}}

\usepackage{authblk}
\usepackage{pgfplots}

\renewcommand{\algorithmicrequire}{\textbf{Input:}}
\renewcommand{\algorithmicensure}{\textbf{Output:}}

\newcommand{\commentout}[1]{}

\newcommand{\ba}{\boldsymbol{a}}
\newcommand{\bA}{\boldsymbol{A}}
\newcommand{\bb}{\boldsymbol{b}}
\newcommand{\bB}{\boldsymbol{B}}

\newcommand{\be}{\boldsymbol{e}}

\newcommand{\bP}{\boldsymbol{P}}

\newcommand{\bQ}{\boldsymbol{Q}}

\newcommand{\bU}{\boldsymbol{U}}
\newcommand{\bv}{\boldsymbol{v}}
\newcommand{\bV}{\boldsymbol{V}}

\newcommand{\bz}{\boldsymbol{z}}

\newcommand{\bSigma}{\boldsymbol{\Sigma}}

\newcommand{\calO}{\mathcal{O}}

\renewcommand{\hat}{\widehat}
\renewcommand{\tilde}{\widetilde}
\renewcommand{\epsilon}{\varepsilon}

\def\<{\big\langle}
\def\>{\big\rangle}
\def\({\Big(}
\def\){\Big)}

\def\C{\mathbb{C}}

\def\N{\mathbb{N}}

\def\R{\mathbb{R}}

\def\calU{\mathcal{U}}
\def\calV{\mathcal{V}}

\newtheorem{theorem}{Theorem}[section]

\newtheorem{lemma}[theorem]{Lemma}

\theoremstyle{definition}

\numberwithin{equation}{section}

\title{Efficient computation of the singular value decomposition \\ with linear photonic circuits}
\author{Johannes Maly$^{\star,\dagger}$,Korbinian Neuner$^\star$, Samarth Vadia$^{\circ,\ddagger}$\footnote{Authors are named in alphabetical order, cf.\ author contribution statement below. Corresponding author: Johannes Maly.}}
\affil{$^\star$ Department of Mathematics, Ludwig-Maximilians-Universität München, Germany \\
$^\dagger$ Munich Center for Machine Learning (MCML), Germany \\
$^\circ$ Linque GmbH, Martiusstr. 5, 80802 Munich, Germany \\
$^\ddagger$ Institute of Informatics, Ludwig-Maximilians-Universität München, Germany}

\date{}

\begin{document}

\maketitle

\begin{abstract}
In light of today's massive data processing, digital computers are reaching fundamental performance limits due to physical limitations and energy consumption. For specific applications, tailored analog systems offer promising alternatives to digital processors. In this work, we investigate the potential of linear photonic chips for accelerating the computation of the singular value decomposition (SVD) of a matrix. The SVD is a key primitive in linear algebra and forms a crucial component of various modern data processing algorithms. Our main insights are twofold: first, hybrid systems of digital controller and photonic chip asymptotically perform on par with large-scale CPU/GPU systems in terms of runtime. Second, such hybrid systems clearly outperform digital systems in terms of energy consumption.

\end{abstract}

\section{Introduction}

Since the invention of transistors and integrated circuits around 1950, the processing power of digital computers has been exponentially increasing. This is due to the increasing number of transistors per integrated circuit (Moore's law) and a downscaling of transistor size (Dennard’s law). 
Today, fundamental limits in computing performance are being reached as the transistor scaling becomes less effective \cite{theis2017end}. Increasing data volume and computational models bring major challenges in the form of massive energy consumption for state-of-the-art computations.

Alternative computing concepts use well-controllable analog systems that occur in nature. 
The most prominent approach is based on analog electrical circuits where a set of passive and active components can be used to manipulate electric analog signals \cite{maclennan2007review}. Various other physical realizations are also possible, such as molecular computing \cite{conrad1990molecular} and fluid computing \cite{shanbhag2008search}. An optical analog system is a great physical medium for analog data processing as electromagnetic waves provide multiple degrees of freedom for information encoding and manipulation. Moreover, as the information manipulation in the coherent regime has no heat dissipation, it also provides a potentially orders-of-magnitude improvement in energy efficiency in certain cases.

In this work, we examine how key primitives in linear algebra can be solved efficiently via optical computation on photonic chips. For instance, Reck et al.\ \cite{reck1994experimental} showed that, for any unitary matrix $\mathbf U \in \C^{n\times n}$, there is an experiment, i.e., an assembly of phase-shifters and beamsplitters, which mimics the action of the matrix on $n$ optical channels. This is achieved by decomposing $\mathbf U = \mathbf U_1\cdots \mathbf U_m$ into a product of unitary matrices $\mathbf U_k \in \C^{n\times n}$, each encoding a two dimensional rotation embedded into $\C^n$. The action of $\mathbf U_k$ is reproduced by suitably configuring the corresponding phase-shifts in a Mach-Zehnder Interferometer (MZI), and $\mathbf U$ can be represented by composing several MZIs. For $\bz \in \C^n$ the matrix-vector product $\bU\bz$ can then be computed with ultra-low latency by encoding the coordinates of $\bz$ in the optical channels and passing them through the composition of MZIs. We view such an MZI configuration as a linear programmable optical chip that can be realized by a photonic integrated circuit (PIC).

By construction, such a PIC can encode only \emph{unitary} matrices $\bU \in \C^n$. To allow fast matrix-vector products with a general matrix $\bA \in \C^{m\times n}$, one could represent $\bA$ in terms of its singular value decomposition (SVD) $\bA = \bU \bSigma \bV^*$. The SVD consists of two unitary transforms $\bU \in \C^{m\times m}, \bV \in \C^{n\times n}$, and a diagonal matrix $\bSigma \in \R^{m \times n}$ the action of which can be realized by amplification or attenuation of the optical signals. 

Since obtaining an SVD of large matrices is costly on digital hardware, this leads to the question whether the SVD of $\bA$ can be computed more efficiently by using the optical hardware itself. Efficient analog SVD computation not only entails fast matrix-vector multiplications on such optical systems, but also fast solvers for linear systems since the (pseudo-)inverse of $\bA$ can be derived from its SVD as $\bA^{\dagger}=\bV\bSigma^{\dagger}\bU^*$. 

\subsection{Synopsis of our results}

We focus here on computing the SVD of a matrix $\bA \in \C^{n\times n}$ by a hybrid system that comprises a digital controller (DC) and a photonic integrated circuit (PIC) realizing unitary vector-matrix products in $\C^n$. The DC can perform digital operations and reprogram the unitary matrix $\bU$ encoded on the PIC, see Figure~\ref{fig:HybridSystem}.

\begin{figure}
    \centering
    \includegraphics[width=\linewidth]{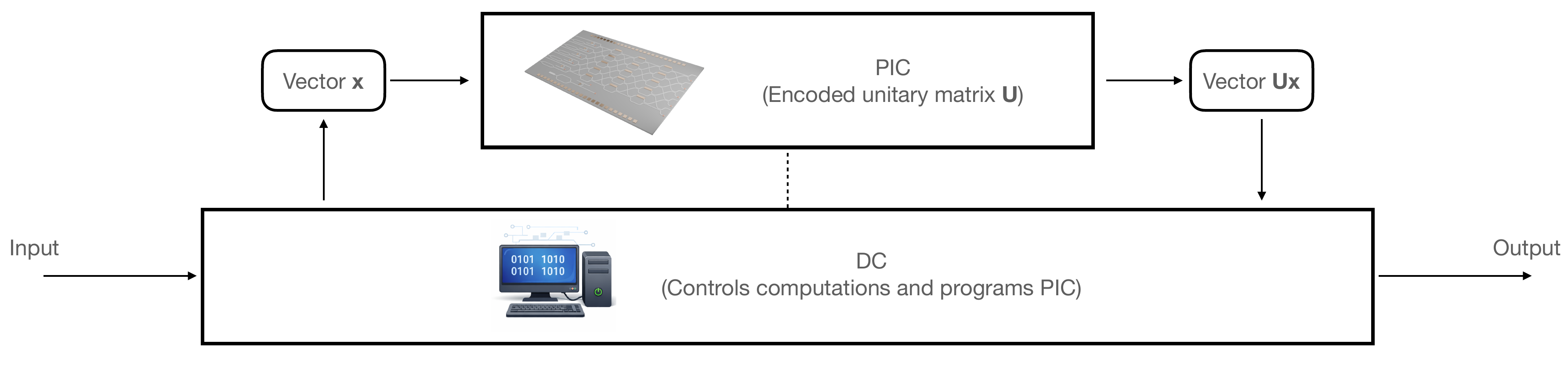}
    
    \caption{Schematic representation of the hybrid system comprising a digital controller (DC) and a photonic integrated circuit (PIC). Artwork for DC generated using ChatGPT.}
    \label{fig:HybridSystem}
\end{figure}

The time needed to evaluate $\bU\bz$ on the PIC, for some $\bz \in \C^n$, is governed by the time needed to encode the entries of $\bz$ into $n$ optical channels. It is small compared to the time needed to reprogram the PIC to a different unitary matrix $\bU'$, see Table \ref{tab:OPS-SAMARTH}. 
To use this hybrid system effectively for SVD algorithms, one has to well balance the number of matrix re-programmings against the number of matrix-vector products.

We study two algorithms for computing the SVD of a matrix $\bA \in \C^{m\times n}$. One naive method which proceeds by iteratively computing QR- and LQ-decompositions of $\bA$ in an alternating fashion, and one state-of-the-art method developed by Golub, Reinsch, and Kahan \cite{Golub1971,GolubKahanSVD} which combines an initial bidiagonalization step with fast chasing of the remaining off-diagonal entries along the diagonal. While the former method is a natural candidate for adaption to the hybrid system by mainly relying on computing orthogonal matrix products, the latter method has been heavily optimized in runtime complexity on digital systems and forms a benchmark for any SVD method.

Our results in Sections \ref{sec:ComputingSVD} and \ref{sec:Experiments} show that, due to slow convergence, neither digital nor hybrid implementation of the naive method can compete with a purely digital, parallelized implementation of the state-of-the-art method. However, we demonstrate that by supporting the bidiagonalization step of Golub-Reinsch-Kahan with a PIC, the resulting hybrid algorithm with single CPU and PIC reaches an asymptotic runtime on par with its digital counterpart that has access to an unlimited number of GPU cores. In addition, it clearly outperforms all digital implementations in terms of expected energy consumption.

\subsection{Related work}

Before we discuss our results in detail, let us review some of the relevant literature.\\

\textbf{SVD algorithms.} About 50 years ago, Golub, Reinsch and Kahan \cite{Golub1971,GolubKahanSVD} proposed an algorithm for computing the singular value decomposition (SVD) of a matrix $\bA$ that forms the basis for nearly all modern implementations. The matrix $\mathbf A$ is first bidiagonalized via Householder-reflections, followed by a variant of the QR-Algorithm to diagonalize the resulting matrix. The calculation of the singular values alone is reasonably fast, the construction of the unitary matrices $\bU$ and $\bV$ takes most of the computational effort. We will refer to this algorithm as GRK-SVD in the following.

An alternative and older approach to SVD computation is Jacobi's method \cite{jacobi1846leichtes}, which iteratively applies Givens rotations to $\bA$ until it has been diagonalized. While Jacobi's method is numerically more stable \cite{drmavc2008new}, it is slower than GRK-SVD and thus hardly used nowadays. There have been proposed several variations such as the cyclic Jacobi method \cite{forsythe1960cyclic} to accelerate it. Only recently, Drma\v{c} and Veseli\'c \cite{drmavc2008new,drmavc2008newII} proposed a sophisticated adaption of Jacobi's method that preserves its superior numerical stability and can compete with QR-based methods in terms of efficiency.\\

\textbf{Architecture of photonic integrated circuits.}
The decomposition algorithm one uses to obtain $\mathbf U_1,\dots,\mathbf U_m$ dictates the required configuration of the MZIs, i.e., the architecture of the photonic integrated circuit. 
In the case of Reck et al.\ \cite{reck1994experimental}, $n(n-1)/2$ MZIs have to be arranged in a triangular shape, see Figure \ref{fig:reck-architecture}, now commonly known as the Reck-architecture.

Building on the work of Reck et al., Clements et al.\ \cite{clements2017optimaldesignuniversalmultiport} present an improved rectangular architecture, achieving a higher density of components, see Figure \ref{fig:clements-architecture}. The number of MZIs needed is still $n(n-1)/2$. However, due to the shorter direct paths between the different MZIs, this architecture is more robust to optical phase mismatch. \\

\textbf{Further analog approaches to resource efficient linear algebra.} Aifer et al.~\cite{aifer2024thermodynamic} propose to solve basic tasks from linear algebra such as solving linear systems, inverting matrices, or computing matrix determinants by sampling from the equilibrium state of a suitably initialized thermodynamical system. They provide complexity comparison with state-of-the-art digital solvers of the respective tasks. Their approach is connected to digital Monte-Carlo sampling for solving linear algebra tasks \cite{forsythe1950matrix}.

Huang et al.~\cite{huang2016evaluation} examine analog accelerators for solving systems of linear equations that are based on ODE simulation. They conclude that such systems can lead to improvements in computation time and efficiency, but also highlight several of their challenges: accurate analog-to-digital conversion, saturation effects for variables with high dynamic range, and scalability of analog systems.

Finally, there has been growing interest in quantum computing architectures based on physical systems utilizing quantum superposition and entanglement. Their main advantage for computation relies on the development of tailored algorithms that can have an exponential speedup in time complexity over classical algorithms \cite{shor1999polynomial, montanaro2016quantum}. For instance, Harrow et al.~proposed a quantum algorithm for solving sparse and well-conditioned $n$-dimensional linear systems with complexity scaling as $\log(n)$ \cite{harrow2009quantum}.

However, the latter algorithm comes with massive resource requirements \cite{scherer2017concrete} which are critical in light of the difficulties of realizing large-scale quantum hardware \cite{biswas2017nasa,preskill2018quantum,de2021materials}. At the moment, the development of new quantum algorithms with proven computational advantage has made little progress. Recent works have even shown that for certain quantum algorithms there are classical algorithms with similar runtime \cite{tang2019quantum, tang2021quantum}.

\begin{figure}[t]
    \centering
    \begin{subfigure}[t]{0.47\textwidth}
        \centering
        \includegraphics[trim= 150 500 35 100, clip, width=0.9\linewidth]{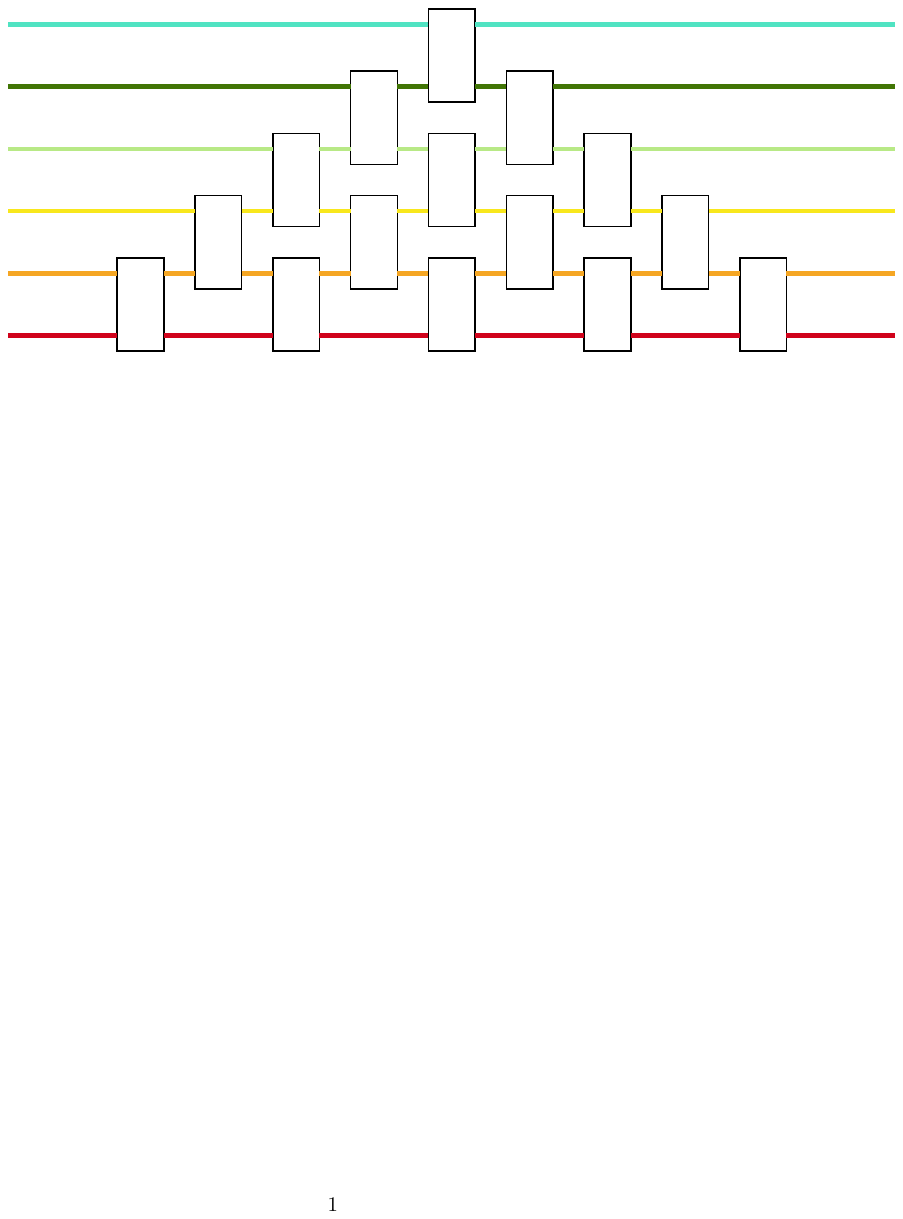}
        \caption{The Reck-Architecture}
        \label{fig:reck-architecture}
    \end{subfigure}
    ~ 
    \begin{subfigure}[t]{0.47\textwidth}
        \centering
        \includegraphics[trim= 150 500 35 100, clip, width=0.9\linewidth]{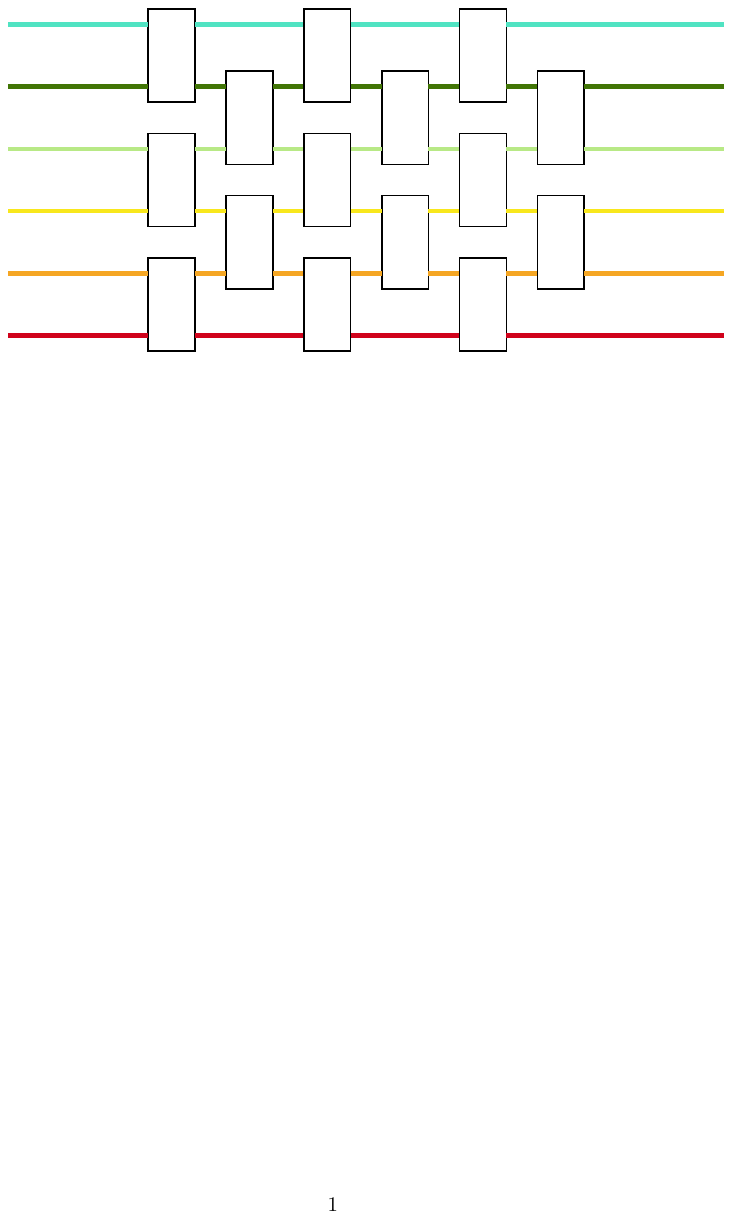}
        \caption{The Clements-Architecture}
        \label{fig:clements-architecture}
    \end{subfigure}
    \caption{MZI configurations of the photonic integrated circuit.}
\end{figure}

\subsection{Notation}
\label{sec:Notation}

In the following, we denote vectors and matrices in bold lower and upper case letters, i.e., a vector $\ba \in \R^d$ can be clearly distinguished from a matrix $\bA \in \R^{m\times d}$ with entries $\bA_{ij}$. 
We abbreviate the identity matrix of dimension $k$ by $\mathbf I_{k\times k }$.
For a vector $\bz \in \R^d$ and a matrix $\bA \in \R^{m\times d}$, we denote the (Euclidean) $\ell_2$-vector norm of $\bz$ by $\| \bz \|_2 = (\sum_{i=1}^d \bz_i^2)^{\frac{1}{2}}$ and the matrix $\ell_2$-norm of $\bA$ by $\| \bA \| = \sup_{\|\bz\|_2=1} \|\bA\bz \|_2$.

Finally, for $i < j$, let us denote by $\mathbf G_{i,j}^n(\theta) \in \R^{n\times n}$ the Givens rotation in $\R^n$ of angle $\theta$ along the plane spanned by $\be_i,\be_j \in \R^n$. Whenever clear from context, we will drop the superscript $n$. The matrix $\mathbf G_{i,j}^n(\theta) \in \R^{n\times n}$ differs from the identity matrix in only four positions: when restricted to the $i$-th and $j$-th rows and columns $\mathbf G_{i,j}^n(\theta)$ is
\begin{align*}
    \mathbf G_{i,j}^n(\theta) \Big|_{\{i,j\}\times\{i,j\}}
    = \begin{bmatrix}
        \cos(\theta) & -\sin(\theta) \\
        \sin(\theta) & \cos(\theta)
    \end{bmatrix},
\end{align*}
while $[\mathbf G_{i,j}^n(\theta)]_{kk} = 1$, for $k \notin \{i,j\}$, and $[\mathbf G_{i,j}^n(\theta)]_{kl} = 0$ for any $k,l \notin \{i,j\}$.
In our algorithms, we will often characterize $\theta$ by its tangent $r=\tan(\theta)$.
For brevity, we thus introduce the shorthand notation $\hat{\mathbf G}_{i,j}^n(r) := \mathbf G_{i,j}^n(\arctan(r))$ for which
\begin{align*}
    \hat{\mathbf G}_{i,j}^n(r) \Big|_{\{i,j\}\times\{i,j\}}
    = \begin{bmatrix}
        \cos(\arctan(r)) & -\sin(\arctan(r)) \\
        \sin(\arctan(r)) & \cos(\arctan(r))
    \end{bmatrix}
    = \frac{1}{\sqrt{1+r^2}}\begin{bmatrix}
        1 & -r \\
        r & 1
    \end{bmatrix}
\end{align*}

\section{Computing the SVD on the specified hybrid system}
\label{sec:ComputingSVD}

Existing routines for computing the SVD of a matrix $\bA \in \R^{m\times n}$ have been optimized for digital systems. Given our hybrid system (Figure \ref{fig:HybridSystem}), it is unclear which decomposition algorithm can optimally leverage the fast unitary matrix-vector products of PIC.

\subsection{SVD via alternating QR decomposition}
\label{sec:QR-SVD}

In a first step, we design a simple method for SVD computation that minimizes the number of DC operations, see Algorithm~\ref{alg:SVDviaQR}. We will refer to Algorithm~\ref{alg:SVDviaQR} as QR-SVD in the following. It approximates the SVD of $\bA$ by alternatingly applying QR- and LQ-decompositions, see Algorithm~\ref{alg:QR_Decomposition}. When implemented on our hybrid system, DC only needs to compute rank-1 matrices to program the required Householder transformations on PIC. 

\begin{algorithm}[t] 
	\caption{\textbf{:}  \textbf{SVDviaQR}} \label{alg:SVDviaQR}
	\begin{algorithmic}[1]
		\Require{$\mathbf A \in \mathbb R^{m\times n}$}
		\Statex
        \State $\mathbf U^{(0)} = \mathbf I_{m\times m}$
        \State $\bSigma^{(0)} = \mathbf A$
        \State $\mathbf V^{(0)} = \mathbf I_{n\times n}$
        \State Initialize $k=1$
        \State pick a $\delta > 0$ as the smallest number you want to consider as non-zero
        \While{$\lVert \bSigma^{(k)} - \mathrm{diag}(\boldsymbol{1}) \odot \bSigma^{(k)} \rVert_\infty > \delta$}
		      \State Decompose $(\bSigma^{(k)})^\top = \tilde{\mathbf Q} \tilde{\mathbf R}$ via Algorithm \ref{alg:QR_Decomposition}
            \State $\mathbf V^{(k+1)} = \tilde{\mathbf Q}^\top \cdot \mathbf V^{(k)}$
            \State $\bSigma^{(k+\frac{1}{2})} = \tilde {\mathbf R}^\top$
            \State Decompose $\bSigma^{(k+\frac{1}{2})} = \mathbf Q \mathbf R$ via Algorithm \ref{alg:QR_Decomposition}
            \State $\mathbf U^{(k)} = \mathbf U^{(k-1)} \cdot \mathbf Q $
            \State $\bSigma^{(k+1)} = \mathbf R$
		      \Let {$k$}{$k+1$}
		\EndWhile
		\Statex
		\Ensure{$\mathbf U = \mathbf U^{(k)}$ (unitary), $\mathbf V = (\mathbf V^{(k)})^T$ (unitary), and $\mathbf \bSigma = \bSigma^{(k)}$ (approximately diagonal) with $\mathbf A \approx \mathbf U \cdot \bSigma \cdot \mathbf V^T$}
	\end{algorithmic}
\end{algorithm}

While we did not find explicit references for QR-SVD, the method is related to the QR-algorithm for computing the eigenvalues of square matrices \cite{francis1961qr,kublanovskaya1962some}. The reason for this lack of references might be that due to the high digital costs of performing even a single QR-decomposition, QR-SVD is impractical on digital systems. Note that if QR-SVD converges, the limit is an SVD~of~$\bA$.

\begin{lemma}
    Let $\mathbf{A} \in \mathbb{R}^{m \times n}$ be an arbitrary matrix, and let $\bSigma^{(1)},\bSigma^{(\frac{3}{2})},\bSigma^{(2)},\dots$ be the sequence generated by Algorithm \ref{alg:SVDviaQR}.
    If $\bSigma^{(k/2)}$ converges, then for any $\varepsilon > 0$ there exists $k_\varepsilon \in \N$ such that, for all $k \ge k_\varepsilon$, there exists an SVD representation $\bA = \bU\bSigma\bV^T$ of $\bA$ with
    \begin{align*}
        \max \left\{ \| \bU^{(k/2)} - \bU \|_2, \| \bSigma^{(k/2)} - \bSigma \|_2, \| \bV^{(k/2)} - \bV \|_2 \right\} \le \varepsilon.
    \end{align*}
\end{lemma}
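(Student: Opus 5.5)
Our approach rests on two facts. The algorithm preserves the factorization $\bA = \bU^{(k/2)}\bSigma^{(k/2)}(\bV^{(k/2)})^T$ exactly at every (half-)step. And a limit $\bSigma^\star$ of the middle factors must be diagonal, because it is simultaneously (the transpose of) an upper-triangular factor $\tilde\bR^\top$ and an upper-triangular factor $\bR$.

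\textbf{Invariant.} We first verify by induction that $\bA = \bU^{(k/2)}\bSigma^{(k/2)}(\bV^{(k/2)})^T$ with $\bU^{(k/2)},\bV^{(k/2)}$ orthogonal. This is a bookkeeping step: it means fixing the indexing of the half-iterates consistently, so that $\bU$ is updated in the QR step and $\bV$ in the LQ step. From $(\bSigma^{(k)})^T=\tilde\bQ\tilde\bR$ we get $\bSigma^{(k)} = \tilde\bR^T\tilde\bQ^T = \bSigma^{(k+\frac12)}\tilde\bQ^T$, and the factor $\tilde\bQ^T$ is absorbed into $\bV$. Likewise $\bSigma^{(k+\frac12)}=\bQ\bR$, and $\bQ$ is absorbed into $\bU$. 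Products of orthogonal matrices are orthogonal, so the invariant holds throughout.

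\textbf{Diagonal limit.} Let $\bSigma^{(k/2)}\to\bSigma^\star$. The integer iterates $\bSigma^{(k)}=\bR$ are upper triangular, and the half iterates $\bSigma^{(k+\frac12)}=\tilde\bR^T$ are lower triangular (in the rectangular sense: zero above, respectively below, the main diagonal). Both subsequences converge to $\bSigma^\star$, and the sets of upper- and lower-triangular matrices are closed, so $\bSigma^\star$ lies in both and is therefore diagonal. Possibly negative diagonal entries can be handled by flipping signs of columns of the orthogonal factors, and any ordering by a permutation. Since the paper does not insist on ordering, we only need nonnegativity. Alternatively we use the Householder-QR convention with nonnegative diagonal of $\bR$, so that the limit entries are $\ge 0$ automatically.

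\textbf{Approximating an exact SVD (main obstacle).} The statement does not assume that $\bU^{(k/2)},\bV^{(k/2)}$ converge. Indeed, when singular values are repeated they need not converge, so we must construct a $k$-dependent SVD close to the iterates. By compactness of the orthogonal group, we look for $\bU,\bV$ near the current iterates with $\bA=\bU\bSigma\bV^T$ and $\bSigma$ diagonal.

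The plan is as follows. Write $\bA=\bU^{(k/2)}\bSigma^{(k/2)}(\bV^{(k/2)})^T$ and perturb. Since $\bSigma^{(k/2)}$ is close to the diagonal matrix $\bSigma^\star$, its SVD $\bSigma^{(k/2)}=\bP_k\bSigma_k\bQ_k^T$ has $\bSigma_k\to\bSigma^\star$ by Weyl's inequality. The delicate point is choosing $\bP_k,\bQ_k$ close to the identity. We argue by contradiction plus compactness. Suppose that for some $\varepsilon$ there are infinitely many $k$ for which no such SVD exists. Then extract a subsequence along which $(\bP_k,\bQ_k)\to(\bP,\bQ)$. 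In the limit $\bSigma^\star=\bP\bSigma^\star\bQ^T$ with $\bSigma^\star$ diagonal, which means $(\bP,\bQ)$ acts within the singular subspaces of $\bSigma^\star$. Replacing $(\bP_k,\bQ_k)$ by $(\bP_k\bP^{-1}\!,\bQ_k\bQ^{-1})$, suitably restricted to the blocks of equal singular values, yields an SVD of $\bSigma^{(k/2)}$ with factors near the identity up to an $o(1)$ error.

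This block-alignment, together with the standard caveat about zero singular values and the rectangular null space, is the step we expect to require the most care. A cleaner route is to take $\bSigma:=\bSigma^{(k/2)}$'s nearest diagonal part and invoke Wedin's $\sin\Theta$ theorem blockwise to bound $\|\bP_k-\mathbf I\|$ and $\|\bQ_k-\mathbf I\|$ by $O(\|\bSigma^{(k/2)}-\bSigma^\star\|/\text{gap})$, grouping equal singular values. Finally we set $\bU=\bU^{(k/2)}\bP_k$, $\bV=\bV^{(k/2)}\bQ_k$, and $\bSigma=\bSigma_k$. Unitary invariance of the norm then gives all three distances at most $\varepsilon$ for $k$ large.
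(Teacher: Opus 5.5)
Your invariant and the diagonality of the limit $\bSigma^\star$ (via closedness of the upper- and lower-triangular sets) are exactly what the paper uses. Your remark that the Householder convention $\mathbf R_{kk}=\|\ba\|_2\ge 0$ makes the limit nonnegative is a welcome addition.

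The gap is in the third step, which you leave open. Along your subsequence, $(\bP_k\bP^T,\bQ_k\bQ^T)\to(\mathbf I,\mathbf I)$. However, these factors give $\bSigma^{(k/2)}=(\bP_k\bP^T)(\bP\bSigma_k\bQ^T)(\bQ_k\bQ^T)^T$. The relation $\bP\bSigma^\star\bQ^T=\bSigma^\star$ only tells you that $\bP,\bQ$ act block-orthogonally on the groups of equal diagonal entries of $\bSigma^\star$. So the middle factor $\bP\bSigma_k\bQ^T$ need not be diagonal unless $\bSigma_k$ is constant on each such group. Knowing only $\bSigma_k\to\bSigma^\star$ from Weyl does not give this, which is why you end with an SVD ``up to an $o(1)$ error''. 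The lemma asks for an exact representation $\bA=\bU\bSigma\bV^T$, so as written the argument does not prove the statement. The Wedin alternative is only sketched, and it would also need matched left/right bases inside each cluster.

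The missing observation is that $\bSigma_k$ does not depend on $k$. Each $\bSigma^{(k/2)}=(\bU^{(k/2)})^T\bA\bV^{(k/2)}$ has exactly the singular values of $\bA$, and by continuity so does $\bSigma^\star$. So you may choose the SVD of $\bSigma^{(k/2)}$ with middle factor exactly $\bSigma^\star$. The limit relation then reads $\bP^T\bSigma^\star\bQ=\bSigma^\star$, and $\bSigma^{(k/2)}=(\bP_k\bP^T)\bSigma^\star(\bQ_k\bQ^T)^T$ holds exactly. No block restriction or $\sin\Theta$ bounds are needed.

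The paper uses the same fact more directly and never decomposes $\bSigma^{(k/2)}$. It fixes $\bSigma:=\bSigma^\star$ for all $k$ and considers the closed set $\calU\calV_{\bSigma^\star}$ of orthogonal pairs with $\bA=\bU\bSigma^\star\bV^T$. It then shows by compactness that $\mathrm{dist}\big((\bU^{(k/2)},\bV^{(k/2)}),\calU\calV_{\bSigma^\star}\big)\to 0$. This holds because every limit point $(\bU,\bV)$ of the iterates satisfies $\bA=\bU\bSigma^\star\bV^T$, by passing to the limit in your invariant.
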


\begin{proof}

    First note that we have $\bA = \bU^{(k/2)} \cdot \bSigma^{(k/2)} \cdot (\bV^{(k/2)})^T$ for every $k$ by construction. Assume that $\bSigma^{(k/2)} \to \bSigma \in  \R^{m \times n}$. Clearly, there exists $k_\varepsilon \in \N$ such that $\| \bSigma^{(k/2)} - \bSigma \|_2 \le \varepsilon$, for all $k \ge k_\varepsilon$. Moreover, $\bSigma$ has to be a diagonal matrix since the iterates $\bSigma^{(k+\frac{1}{2})}$ are lower triangular while the iterates $\bSigma^{(k)}$ are upper triangular. 
    Define the set
    \begin{align*}
        \calU\calV_{\bSigma} = \{ (\bU,\bV) \in \R^{m\times m} \times \R^{n\times n} \colon \bU,\bV \text{ unitary and } \bA = \bU \bSigma \bV^T \}.
    \end{align*}
    Assume that there exists a subsequence $(\bU^{(k'/2)},\bV^{(k'/2)})$ such that $\mathrm{dist}((\bU^{(k'/2)},\bV^{(k'/2)}),\calU\calV_{\bSigma}) \ge \varepsilon$, for all $k'$. Then, there is a convergent subsequence $(\bU^{(k''/2)},\bV^{(k''/2)})$ converging to some $(\bU,\bV) \notin \calU\calV_{\bSigma}$ such that $\bU \bSigma \bV^T = \bA$. Contradiction. Hence, we conclude that $\mathrm{dist}((\bU^{(k'/2)},\bV^{(k'/2)}),\calU\calV_{\bSigma}) \to 0$ which verifies our claim. 
\end{proof}

The performance of QR-SVD on the hybrid system can be optimized by replacing the Householder reflections in Algorithm \ref{alg:QR_Decomposition} with Givens rotations. Indeed, when working with Householder reflections, DC has to compute their decompositions into products of two-dimensional rotations in order to encode them on PIC. This decomposition is cubic in the matrix size and should be avoided.

It can be bypassed by working with Givens rotations instead. To illustrate this, take $\mathbf a$ to be the $k$-th column of $\mathbf A^{(k-1)}$ below the diagonal, i.e., 
\begin{align*}
    \mathbf a = (\mathbf A^{(k-1)}_{k,k}, \mathbf A^{(k-1)}_{k+1,k}, \dots, \mathbf A^{(k-1)}_{m,k})^\top \in \mathbb R^{m-k+1},
\end{align*}
cf.\ Algorithm \ref{alg:QR_Decomposition}. In the $k$-th iteration, we need to construct a unitary transform that rotates $\ba$ to the first unit vector in $\mathbb R^{m-k+1}$. We will do so by replacing the Householder transform with a composition of $m-k$ Givens rotations. Recall the definition of Givens rotations $\mathbf G_{i, j}(\theta)$ and $\hat{\mathbf{G}}_{i, j}(r)$ in Section \ref{sec:Notation}.

We first apply a Givens-Rotation $\mathbf G_{m-1, m}(\theta_m)$ to $\mathbf a$ with $\theta_m$ chosen to zero its last entry. This can be computed digitally using a constant number of operations, as only two elements change. Repeating the process, we iteratively construct $\mathbf G_{\ell-1,\ell}(\theta_\ell)$, for $\ell = m-1,m-2,\dots,k+1$ to get $\mathbf G_{k, k+1}(\theta_{k+1}) \cdots \mathbf G_{m-1,m}(\theta_m) \ba = \| \ba \|_2 \cdot \be_1 \in \R^{m-k+1}$. This costs $\calO(m-k)$ digital operations.

Having computed the required rotation angles $\theta_{m},\dots,\theta_{k+1}$, we can now encode the respective 2D Givens rotations on the first $m-k$ blocks of the first diagonal of a PIC with Reck-architecture, see Figure \ref{fig:reck-architecture}.\footnote{If one designs a specialized PIC for QR-decomposition, it would suffice to use a strongly simplified architecture consisting of a single diagonal, cf.\ Figure \ref{fig:encoding_bidiag}.} 
Setting all other blocks to identity, PIC realizes the unitary transform $\mathbf G_{k, k+1}(\theta_{k+1}) \cdots \mathbf G_{m-1,m}(\theta_m)$ and can be used to compute $\bA^{(k)} = \mathbf G_{k, k+1}(\theta_{k+1}) \cdots \mathbf G_{m-1,m}(\theta_m) \cdot \bA^{(k-1)}$ and $\bQ^{(k)} =  \bQ^{(k-1)} \cdot \mathbf G_{m-1,m}^\top(\theta_m) \cdots \mathbf G_{k, k+1}^\top(\theta_{k+1}) $. To compute $\bQ^{(k)}$ with a PIC programmed to realize the multiplication with $\mathbf G_{k, k+1}(\theta_{k+1}) \cdots \mathbf G_{m-1,m}(\theta_m)$, we just pass $(\bQ^{(k-1)})^T$ through the PIC and transpose the result. Hence, the PIC doesn't need to be reprogrammed for the latter operation.

Our experiments in Section \ref{sec:Experiments} show that QR-SVD on the hybrid system cannot compete with state-of-the-art routines such as GRK-SVD on purely digital systems. The main efficiency bottleneck appears to be the slow convergence of QR-SVD. To reach high-precision solutions, the algorithm requires many iterations.

\begin{figure}[t]
    \centering
    \hspace{-2cm}
    \begin{subfigure}[t]{0.48\textwidth}
        \centering
        \includegraphics[trim= 0 490 140 0, clip, width=1.0\linewidth]{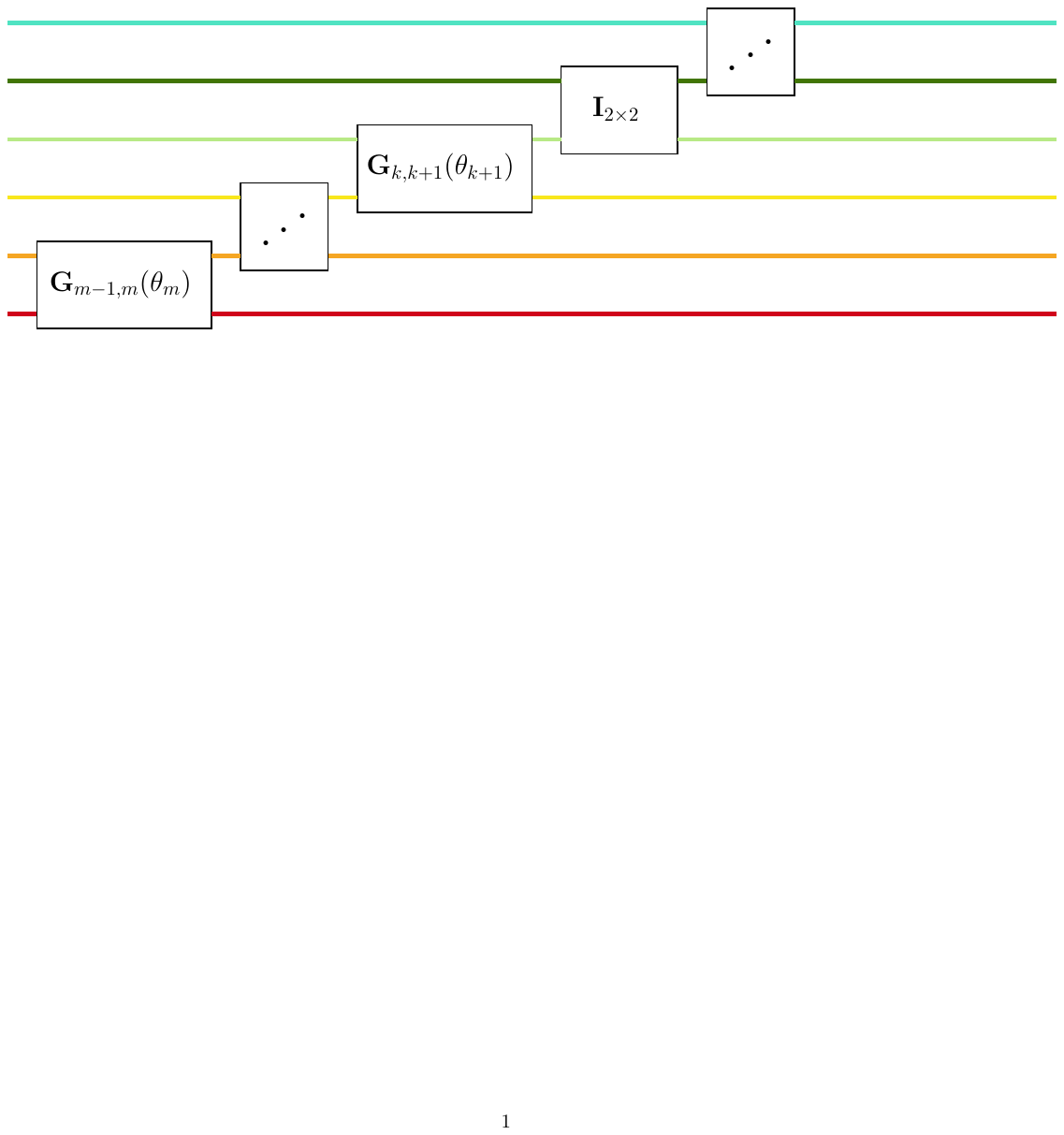}
        \caption{Example encoding for QR-Decomposition.}
        \label{fig:encoding_bidiag}
    \end{subfigure}
    ~ 
    \begin{subfigure}[t]{0.48\textwidth}
        \centering
        \includegraphics[trim= 0 492 140 0, clip, width=1.0\linewidth]{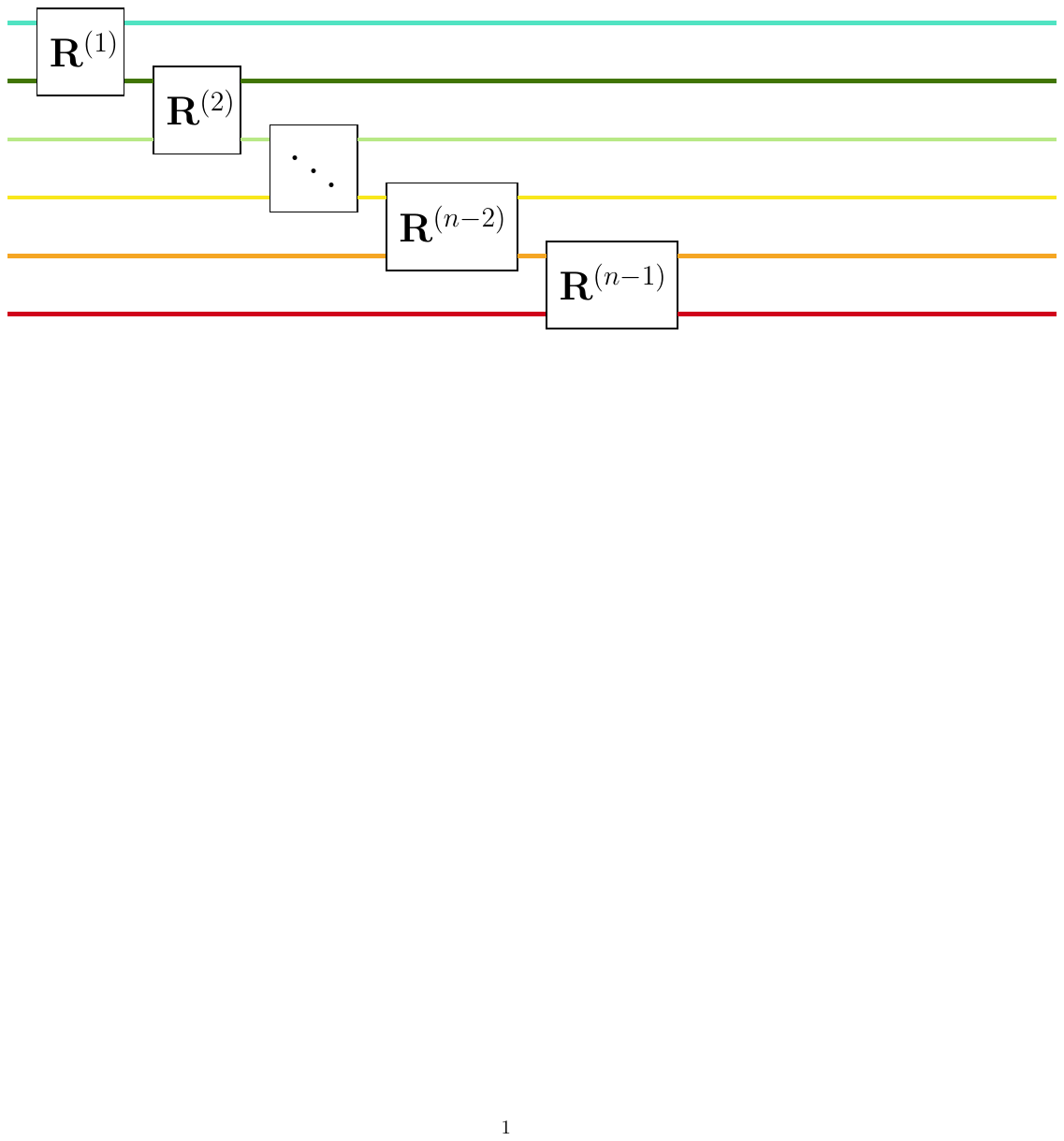}
        \caption{Example encoding for chasing phase, cf.\ Algorithm \ref{alg:rgkSVDforChip}.}
        \label{fig:encoding_qriteration}
    \end{subfigure}
    \caption{MZI configurations of the photonic integrated circuit for accelerating (i) the QR-decomposition in QR-SVD and the bidiagonalization step of GRK-SVD, and (ii) the composition of the singular vector matrices in the chasing phase of GRK-SVD. Note that (a) depicts the PIC configuration in the $k$-th iteration of Algorithm \ref{alg:PhotonicQRDecomposition}, and that flipping the PIC from (a) to (b) can be done without changing the hardware by reallocating the optical channels.}
\end{figure}

\subsection{Optimizing GRK-SVD for our hybrid system}

In light of the unfavorable performance of QR-SVD, we revisit the state-of-the-art algorithm GRK-SVD and examine how it can be optimized for our hybrid system. Let us assume in the following that $\bA \in \R^{m\times n}$ with $m \ge n$.
GRK-SVD consists of two main components: a bidiagonalization step and a subsequent chasing phase which removes the remaining off-diagonal entries.

In the bidiagonalization step, GRK-SVD applies $n$ Householder-reflections of decreasing size iteratively from each side to reduce $\bA$ to bidiagonal shape. The unitary transforms used in the $k$-th step are 
\begin{align*}
    \mathbf U^{(k)} =
    \begin{bmatrix}
       \mathbf I_{(k-1)\times(k-1)} & \boldsymbol 0 \\
         \boldsymbol 0 & \mathbf U_{\mathbf a - \| \mathbf a \|_2 \mathbf e_1}
    \end{bmatrix} \in \R^{m \times m}
    \qquad \text{and} \qquad
    \mathbf V^{(k)} =
    \begin{bmatrix}
       \mathbf I_{(k-1)\times(k-1)} & \boldsymbol 0 \\
         \boldsymbol 0 & \mathbf V_{\mathbf b - \| \mathbf b \|_2 \mathbf e_1}
    \end{bmatrix} \in \R^{n \times n},
\end{align*}
where $\ba$ and $\bb$ are the (partial) $k$-th column/row of $\bA$ that shall be rotated to a unitary vector, and $\mathbf U_{\mathbf z_U} = \mathbf I_{(m - k + 1)\times(m - k + 1)} - \frac{2}{\| \mathbf z_U \|_2^2} \mathbf z_U \mathbf z_U^\top$ and $\mathbf V_{\mathbf z_V} = \mathbf I_{(n - k + 1)\times(n - k + 1)} - \frac{2}{\| \mathbf z_V \|_2^2} \mathbf z_V \mathbf z_V^\top$, for $\mathbf z_U \in \mathbb R^{m-k+1}$, $\mathbf z_V \in \R^{n-k+1}$, and $\mathbf e_1$ denoting the first unit vector of the respective dimension.
Due to the required matrix-matrix products in each iteration, the bidiagonalization step heavily impacts the computational complexity of GRK-SVD, cf.\ Table \ref{tab:GRK-SVD-OPS-DSC}.

At this point, GRK-SVD has constructed a bidiagonal matrix $\bB \in \R^{m\times n}$ and unitary matrices $\bP \in \R^{m\times m}$, $\bQ \in \R^{n\times n}$ such that $\bA = \bP\bB\bQ$. To fully diagonalize $\bB$, GRK-SVD uses Givens rotations and chases the non-zero off-diagonal entries of $\bB$ along the second diagonal \cite{Golub1971}. By introducing Wilkinson shifts, this diagonalization method exhibits cubic convergence in general, cf.\ \cite{Golub1971}. Due to the bidiagonal shape of $\bB$, each multiplication with a Givens rotation can be computed quickly, even on a digital system. Combined with the cubic convergence, this leads to low computational complexity of the chasing step, see Table \ref{tab:GRK-SVD-OPS-DSC}.

perform the matrix products $\mathbf Q \mathbf U$ and $\mathbf V \mathbf P$ in order to get the full SVD.

When optimizing GRK-SVD for our hybrid system, only the bidiagonalization step needs to be changed substantially.\footnote{Indeed, applying Givens rotations to a bidiagonal matrix is of complexity $\calO(1)$ such that using PIC in the chasing phase would bring no relevant performance gain when computing the singular values. The only change in the hybrid implementation of the chasing phase is to directly build the two unitary transformation matrices containing the singular vectors on PIC. Instead of computing them as a product of several Givens rotations, one can encode the single Givens rotations in $2\times2$-blocks of PIC. Figure \ref{fig:encoding_qriteration} shows the exact arrangement of Givens-matrices on PIC, separately for the two steps of our algorithm.} 
Since this proceeds by interleaving the iterations of a QR- and an LQ-decomposition, we can slightly modify the scheme for accelerating the QR-decomposition of QR-SVD described in Section \ref{sec:QR-SVD}, see Algorithm \ref{alg:rgkSVDforChip}. The impact on computational complexity is illustrated in Table \ref{tab:GRK-SVD-OPS-Hybrid}.

\section{Experiments}
\label{sec:Experiments}

We will now present evidence for our main claims that (i) even a hybrid implementation of QR-SVD cannot compete with digital GRK-SVD, and that (ii) a hybrid implementation of GRK-SVD shows clear benefits when compared to its digital counterpart, both in terms of runtime and energy consumption.

To evaluate the effective runtime, we first subdivide both algorithms into basic operations and count the number of each operation. We consider three different versions of each algorithm: a purely digital implementation with sequential computing (\textbf{D}igital-\textbf{S}ingle\textbf{C}ore), a purely digital implementation in which matrix operations are executed on GPUs with an unlimited number of cores (\textbf{D}igital-\textbf{M}ulti\textbf{C}ore), and a hybrid implementation with single core CPU in which parts of the operations are executed on the PIC (\textbf{H}ybrid). The resulting operation counts for QR-SVD and GRK-SVD are summarized in Tables \ref{tab:QR-SVD-OPS}--\ref{tab:QR-SVD-OPS-Hybrid} and Tables \ref{tab:GRK-SVD-OPS-DSC}--\ref{tab:GRK-SVD-OPS-Hybrid}, respectively. Further details on how these counts have been derived are discussed in Appendix \ref{sec:OperationsCountsAlgorithm}.

We then fix a target precision of $10^{-6}$ for the off-diagonal entries of $\bSigma^{(k)}$ and estimate the expected number of iterations for each algorithm by linear regression over random executions. Figure~\ref{fig:ExpectedIterations} shows that the expected number of iterations can be well-described by a linear fit. Note that QR-SVD requires more iterations in general and shows a large variance of the necessary number of iterations. The worse convergence rate of QR-SVD is further illustrated in Figure \ref{fig:ConvergenceRate}. 

\begin{figure}
     \centering
     \begin{subfigure}[b]{0.45\textwidth}
         \centering
         \includegraphics[width=\textwidth]{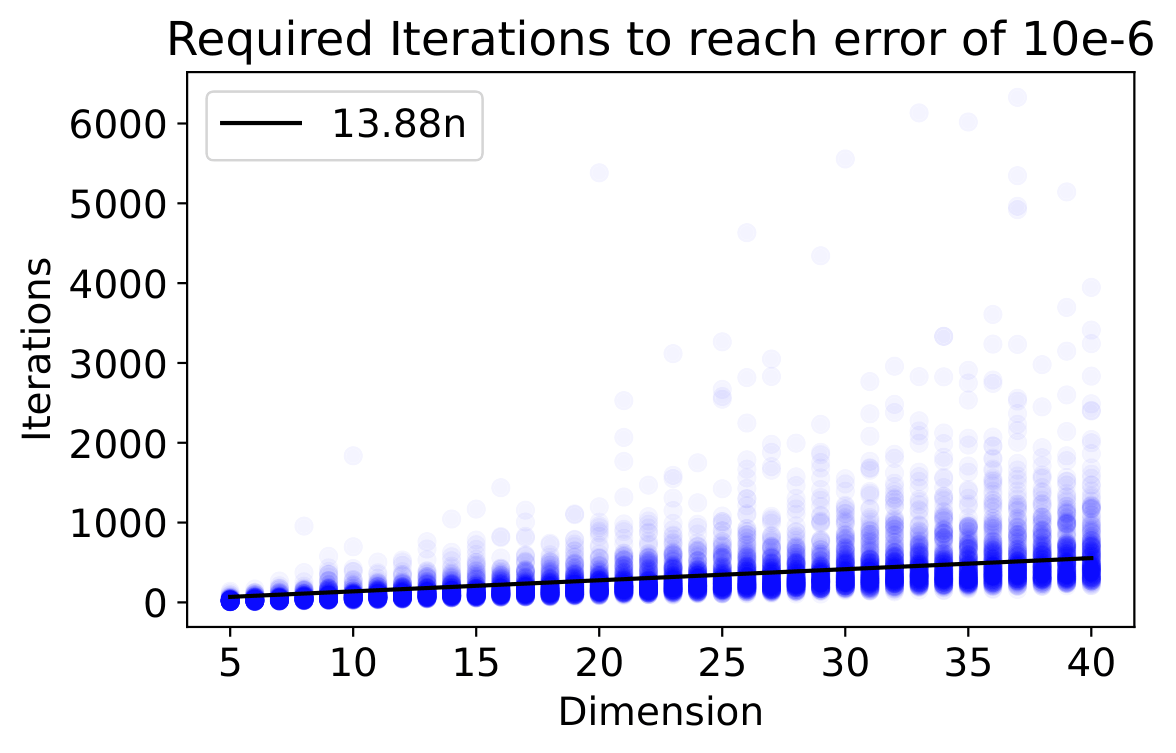}
         \caption{QR-SVD}
     \end{subfigure}
     \hfill
     \begin{subfigure}[b]{0.45\textwidth}
         \centering
         \includegraphics[width=\textwidth]{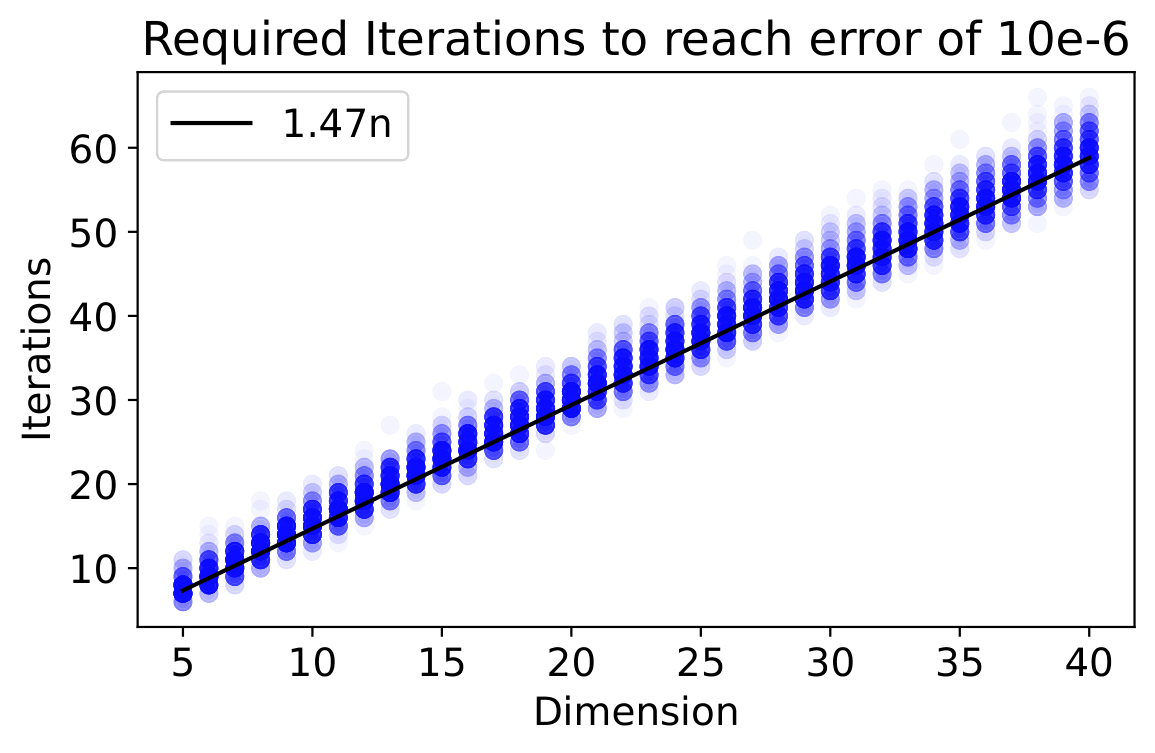}
         \caption{GRK-SVD}
     \end{subfigure}
        \caption{Number of iterations to reach target precision of $10^{-6}$ for QR-SVD and GRK-SVD. Linear best fit for medians is $\#\text{Iterations}=an+b$, where $a \approx 13.88$ and $b \approx -78.61$ (QR-SVD) and $a \approx 1.47$ and $b\approx 0.83$ (GRK-SVD), and $n$ denotes the dimension.}
        \label{fig:ExpectedIterations}
\end{figure}

\begin{figure}
     \centering
     \begin{subfigure}[b]{0.45\textwidth}
         \centering
         \includegraphics[width=\textwidth]{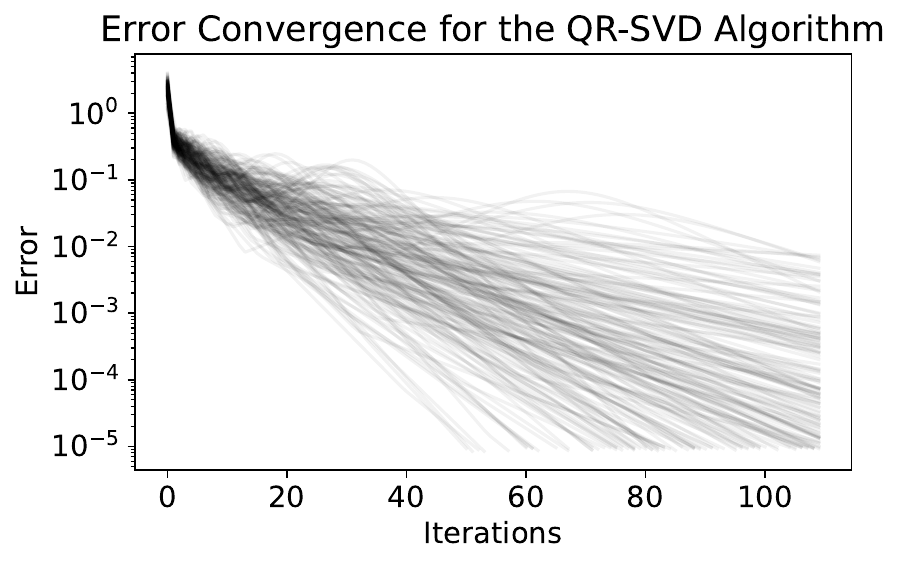}
         \caption{QR-SVD}
     \end{subfigure}
     \hfill
     \begin{subfigure}[b]{0.45\textwidth}
         \centering
         \includegraphics[width=\textwidth]{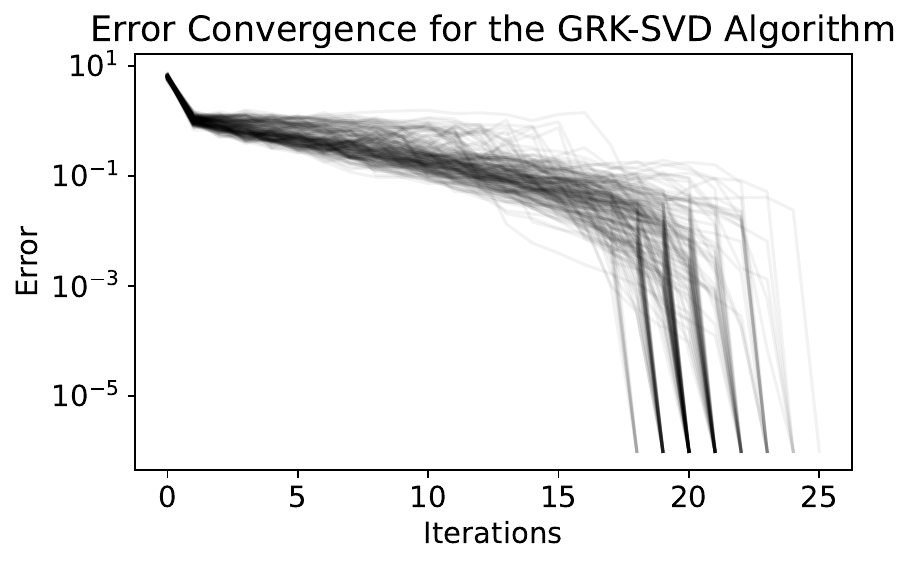}
         \caption{GRK-SVD}
     \end{subfigure}
        \caption{Error on off-diagonal entries of $\bSigma^{(k)}$ when applying QR-SVD and GRK-SVD to $200$ randomly drawn matrices in $\R^{15\times 15}$.} 
        \label{fig:ConvergenceRate}
\end{figure}

By combining the expected number of iterations of both algorithms in Figure \ref{fig:ExpectedIterations} with the counts of computational operations per method in Tables \ref{tab:QR-SVD-OPS}--\ref{tab:GRK-SVD-OPS-Hybrid} and multiplying them with time estimates for single CPU/GPU/PIC operations, see Table \ref{tab:OPS-COMPARISON}, we obtain runtime estimates for all described methods. The code to reproduce the presented experiments is provided at \url{https://github.com/Korbinian-Neuner/Computing-the-SVD-with-Photonic-Chips/tree/main}.\\

\begin{table}[t]
\centering
\begin{tabular}{|l|p{1cm} || l | p{1cm} || l | p{1cm}|}
  \hline
  \multicolumn{6}{|c|}{Relative runtime} \\
  \hline\hline
  CPU &  & GPU &  & PIC &  \\ \hline
  Addition & 1 & Addition & 4 & & \\
  Multiplication & 1 & Multiplication & 4 & & \\
  Division & 20 & & & & \\
  Square root & 15 & & & & \\
  & & & & PIC Configuration & 10000 \\
  & & & & PIC Operation & 50 \\
  \hline
\end{tabular}
\caption{Relative time cost estimates for different operations on our model systems \cite{vasilakis2015instruction}.
PIC Configuration refers to reprogramming the matrix $\bU$ encoded on PIC;
PIC Operation refers to computing a single matrix-vector product between $\bU$ and an input vector $\bv$.
The time estimates for PIC are measured relative to a 4GHz CPU and a GPU with clock rate 1GHz (0.25 ns represent one time unit here), and use the absolute time estimates in Table \ref{tab:OPS-SAMARTH} as reference, see Appendix \ref{sec:RelativeRuntime} for further details. We infer from the Agner-Fog instruction tables \cite{fog2018instruction}
that the time costs for divisions are slightly higher than for the square root.}
\label{tab:OPS-COMPARISON}
\end{table}

\begin{table}[t]
\centering
\begin{tabular}{|l|p{6cm}|}
    \hline
  Processor & total energy cost\\ \hline
  CPU (per time unit) & 375 pJ \\
  GPU (per FLOP) & 32.24 pJ \\
  PIC (Configuration) & $640n(n-1)$ pJ\\
  PIC (Operation) & $320n$ pJ \\
  \hline
\end{tabular}
\caption{Absolute energy cost estimates for one time unit on CPU, GPU, and different operations on a photonic integrated circuit. The PIC Operation values are estimated based on the state-of-the-art encoding/decoding costs in a transceiver while PIC Configuration costs are based on the energy costs of the MZI configuration via thermo-optic effect on a standard silicon PIC with $n$ channels. The CPU and GPU values are based on an AMD Ryzen 7 9800x3d consuming on average around 120W on 8 cores, which at 4GHz corresponds to 375 pJ per cycle per core, and an NVIDIA B200 consuming at peak around 1200W while reaching $37.22*10^{12}$ double flops, which amounts to around $32.24$ pJ per operation. These specifications can be found on the webpages of \href{https://www.amd.com/de/products/processors/desktops/ryzen/9000-series/amd-ryzen-7-9800x3d.html}{AMD} and \href{https://www.techpowerup.com/gpu-specs/b200.c4210}{NVIDIA}.}

\label{tab:OPS-ENERGY}
\end{table}

\textbf{Experiment 1 --- Runtime.} In the first experiment, we compare the expected runtime of QR-SVD and GRK-SVD in all types of implementation, i.e., digital single-core, digital multi-core, and hybrid (D-SC,D-MC,H). For simplicity, we only consider the SVD of square-matrices, i.e. $m=n$. Figure \ref{fig:ExpectedRuntime} shows that the hybrid implementation of QR-SVD is clearly outperformed by the digital implementations of GRK-SVD. This is as expected considering the slower convergence rate of QR-SVD. At the same time, we see that the hybrid implementation of GRK-SVD clearly outperforms the D-SC implementation of GRK-SVD and performs on par with the D-MC implementation that uses an unlimited number of GPU cores. \\

\begin{figure}
     \centering
     \begin{subfigure}[b]{0.45\textwidth}
         \centering
         \includegraphics[width=\textwidth]{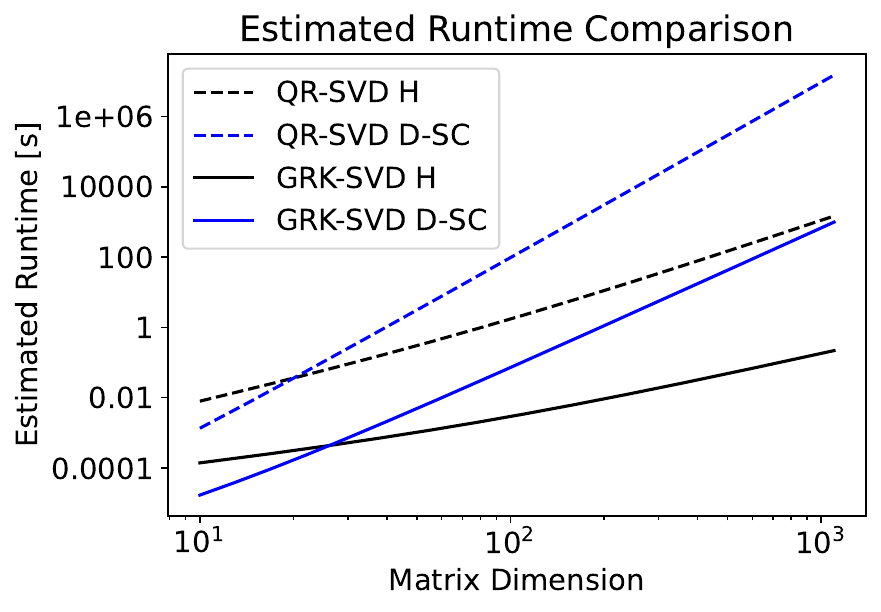}
         \caption{D-SC vs H}
     \end{subfigure}
     \hfill
     \begin{subfigure}[b]{0.45\textwidth}
         \centering
         \includegraphics[width=\textwidth]{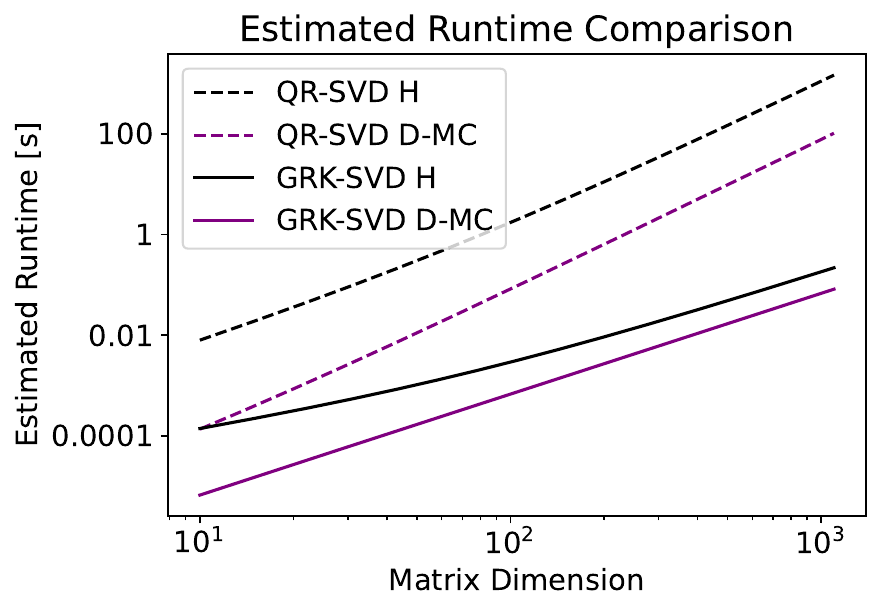}
         \caption{D-MC vs H}
     \end{subfigure}
        \caption{Comparison of expected runtime of QR-SVD (D-SC,D-MC,H) and GRK-SVD (D-SC,D-MC,H).}
        \label{fig:ExpectedRuntime}
\end{figure}

\textbf{Experiment 2 --- Energy consumption.} In the second experiment, we compare the expected energy consumption of QR-SVD and GRK-SVD in all types of implementation, i.e., digital single-core, digital multi-core, and hybrid (D-SC,D-MC,H). Again, we only consider the SVD of square-matrices. The energy consumption of the CPU is calculated by multiplying the number of operations executed on the CPU by the average energy cost per FLOP. This means that we treat the energy costs of different types of operations the same. We proceed analogously for the GPU where we use the counts of actual GPU operations, see Table \ref{tab:GRK-SVD-OPS-GPU}. The latter has been computed as the difference between the total number of operations (on CPU and GPU) and the parallelized count of the corresponding operation (on CPU). In case of the hybrid implementation, we add the expected energy consumption of the PIC operations, see Table \ref{tab:OPS-ENERGY}, to the expected energy consumption of the CPU operations. Figure~\ref{fig:ExpectedEnergy} illustrates that the hybrid implementation of GRK-SVD outperforms all digital implementations. A clear benefit can already be observed for matrices of size $n = \mathcal{O}(10^2)$, and the efficiency gap notably widens with increasing matrix size. \\

\begin{figure}
     \centering
     \begin{subfigure}[b]{0.45\textwidth}
         \centering
         \includegraphics[width=\textwidth]{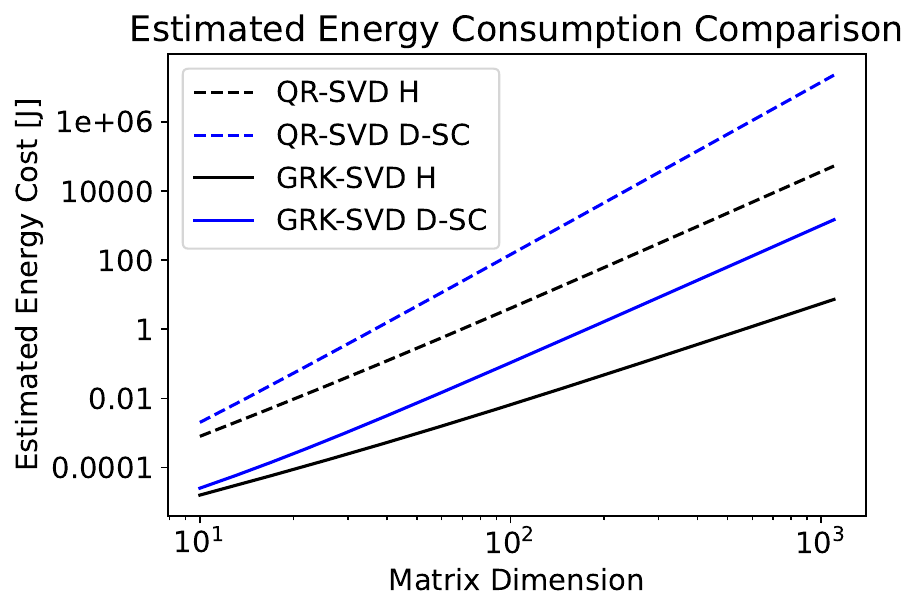}
         \caption{D-SC vs H}
     \end{subfigure}
     \hfill
     \begin{subfigure}[b]{0.45\textwidth}
         \centering
         \includegraphics[width=\textwidth]{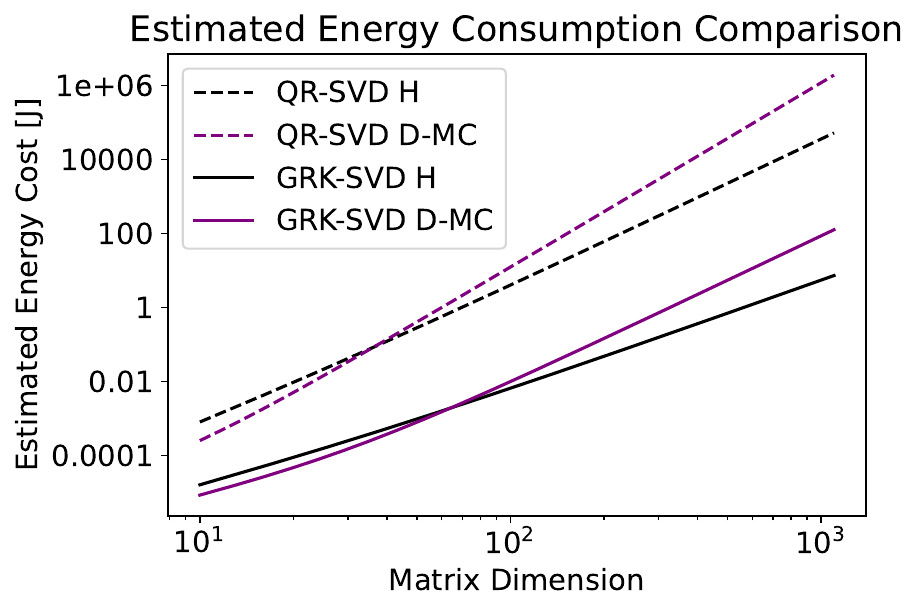}
         \caption{D-MC vs H}
     \end{subfigure}
        \caption{Comparison of expected energy consumption of QR-SVD (D-SC,D-MC,H) and GRK-SVD (D-SC,D-MC,H).}
        \label{fig:ExpectedEnergy}
\end{figure}

\textbf{On the validity of our comparison.} Figure \ref{fig:Timecheck} illustrates that our predicted runtimes match the actual runtime of QR-SVD and GRK-SVD. In the case of GRK-SVD, the actual runtime is about 6 times higher than the predicted runtime, which can be explained by the ignored memory operations to retrieve the non-cached data from a separate memory location. Omitting these memory retrieving operations is expected to have a higher impact on digital implementations compared to the hybrid implementations, as memory retrieval in hybrid system can be done in parallel to the PIC Operation. Thus, it does not weaken our claim that the hybrid implementation is competitive, even though we relied on conservative time estimates of PIC Operation and Configuration.

Finally, note that the implementations of QR-SVD and GRK-SVD in Algorithms \ref{alg:SVDviaQR} and \ref{alg:rgkSVD} are stated in a way to enhance conceptual clarity, not to optimize their actual runtime. Nevertheless, Figure \ref{fig:TimecheckLAPACK} shows that, as the matrix dimension grows, our predicted runtimes are a clear lower bound on the actual runtimes of optimized LAPACK routines. As before, this discrepancy can be explained by the ignored memory costs.

\begin{figure}
     \centering
     \begin{subfigure}[b]{0.45\textwidth}
         \centering
         \includegraphics[width=\textwidth]{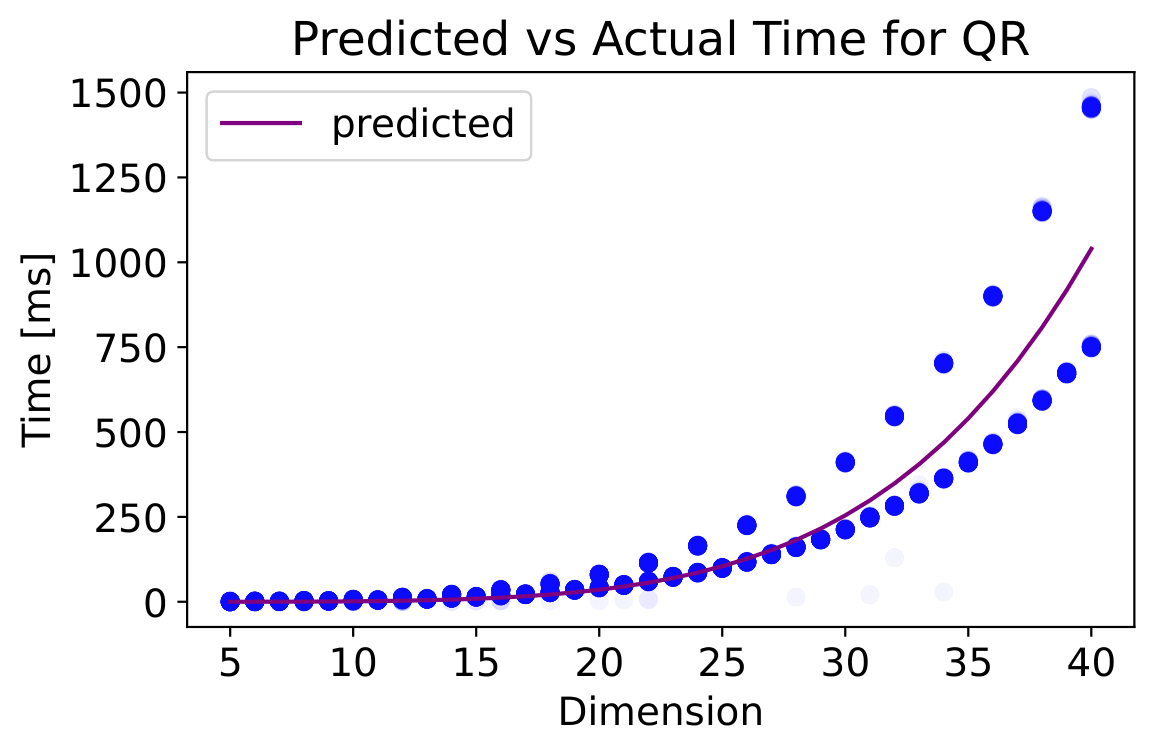}
         \caption{QR-SVD, D-SC}
     \end{subfigure}
     \hfill
     \begin{subfigure}[b]{0.45\textwidth}
         \centering
         \includegraphics[width=\textwidth]{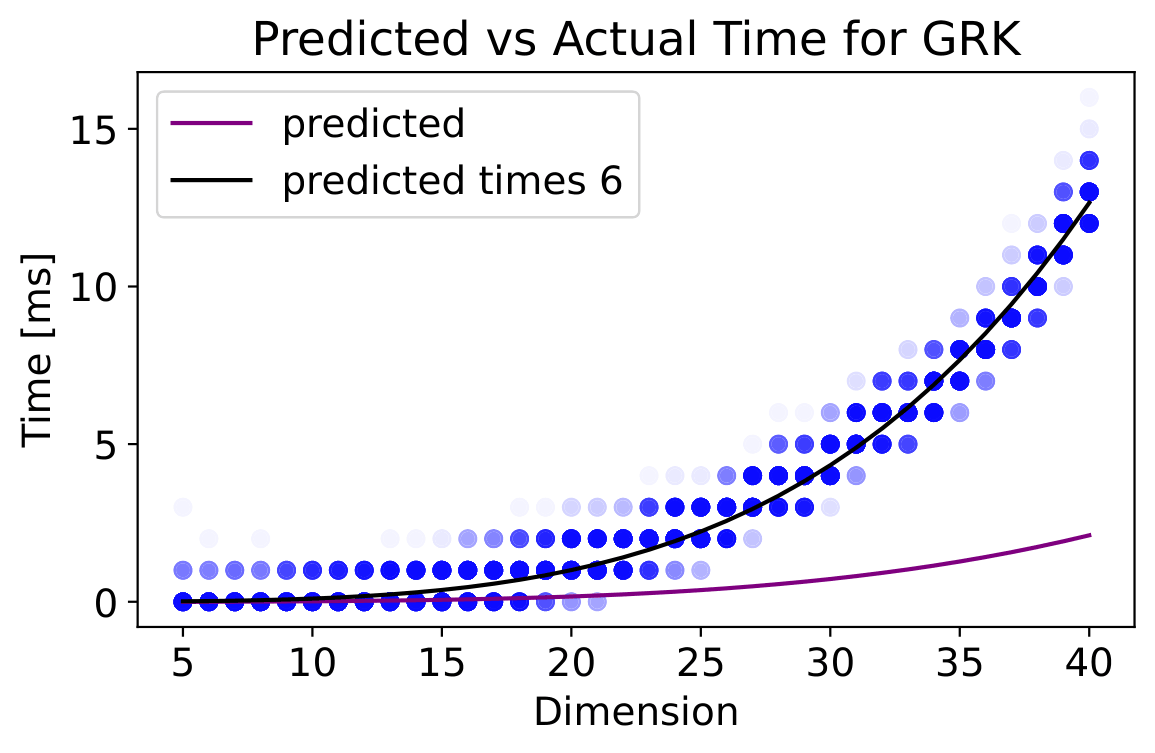}
         \caption{GRK-SVD, D-SC}
     \end{subfigure}
        \caption{Experimental verification of our time cost model comparing estimated and actual runtime of our digital implementations of QR-SVD and GRK-SVD.}
        \label{fig:Timecheck}
\end{figure}

\begin{figure}
     \centering
     \begin{subfigure}[b]{0.45\textwidth}
         \centering
         \includegraphics[width=\textwidth]{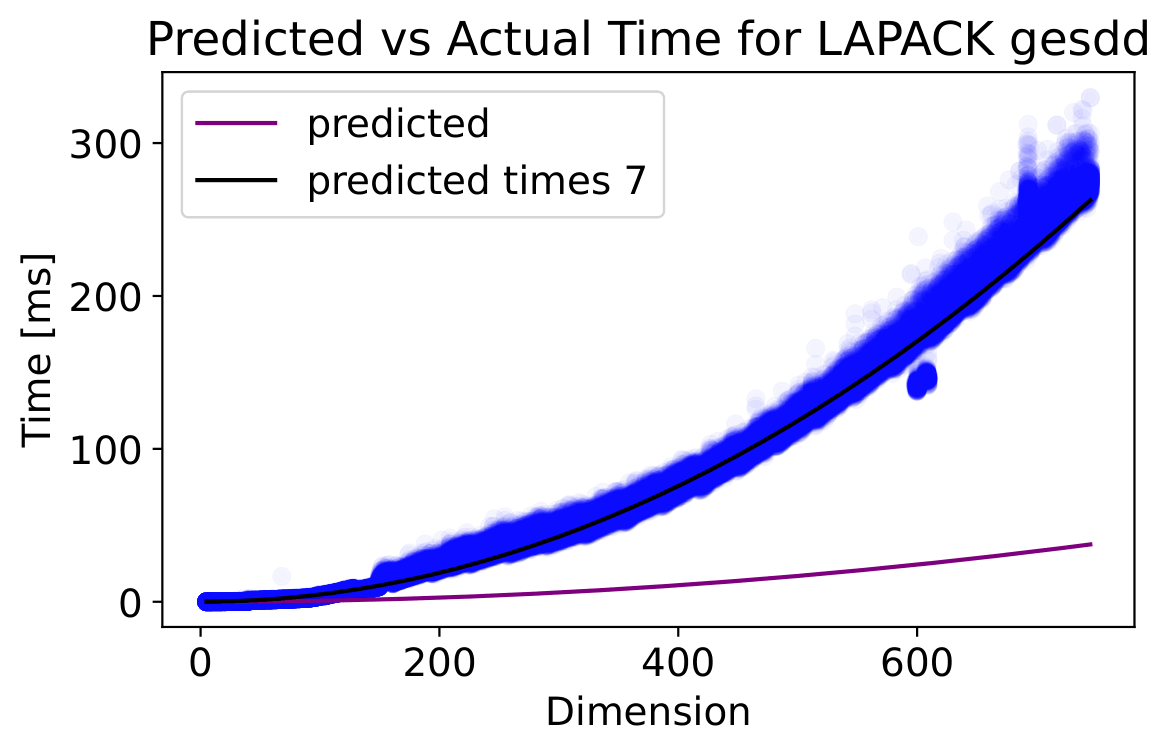}
         \caption{GRK-SVD D-MC vs LAPACK gesdd}
     \end{subfigure}
     \hfill
     \begin{subfigure}[b]{0.45\textwidth}
         \centering
         \includegraphics[width=\textwidth]{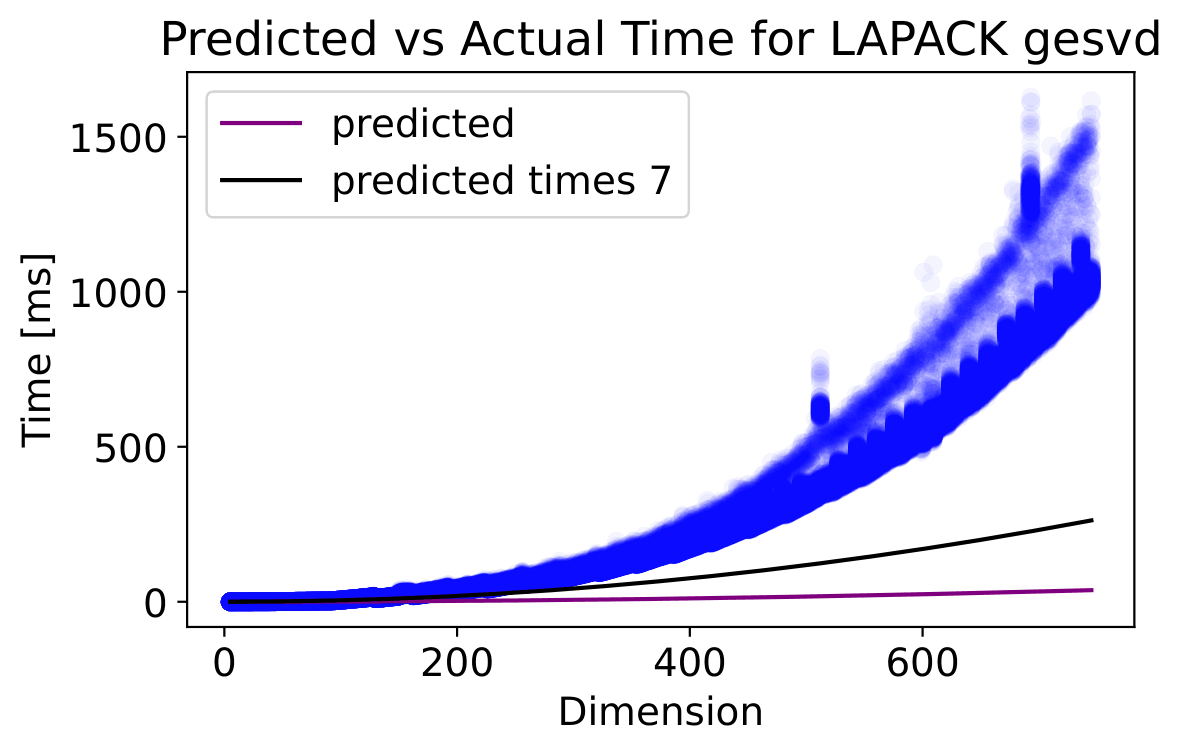}
         \caption{GRK-SVD D-MC vs LAPACK gesvd}
     \end{subfigure}
        \caption{Experimental verification of our time cost model against the optimized LAPACK routines gesdd and gesvd. Here we used the runtime estimates for the D-MC implementation.}
        \label{fig:TimecheckLAPACK}
\end{figure}

\section{Discussion}

The presented analysis shows that PICs yield promising solutions for increasing the energy efficiency of purely digital systems. We demonstrated that already for small matrix sizes of $n \approx 10^2$ a hybrid approach outperforms single core implementations in terms of runtime and even multi core implementations in terms of energy efficiency.

Computing the SVD is a core component of various modern data processing algorithms. Efficiency gains thus carry over to a multitude of methods including other linear algebraic primitives such as matrix inversion and solving linear systems. In future work, it would be desirable to identify further basic operations that can be outsourced to photonic components, and to design advanced hybrid methods for SVD computations and other important primitives, challenging digital state-of-the-art algorithms both in terms of runtime and energy efficiency. 

\section*{Author contribution statement}

JM and SV initiated the conducted research by joint discussions. JM carried out the main conceptual work and wrote great parts of the manuscript. KN optimized the hybrid implementations of the methods, carried out the operation counts, the experiments, and the computations of energy consumption, and provided detailed pseudocode of all involved methods. SV provided specifications of the discussed photonic integrated circuits. KN and SV helped finalizing the manuscript. 

\section*{Data and code availability statement}

All code to reproduce the experiments presented here can be accessed via \url{https://github.com/Korbinian-Neuner/Computing-the-SVD-with-Photonic-Chips/tree/main}. The datasets generated and/or analyzed during the current study are available here: \url{https://github.com/Korbinian-Neuner/Computing-the-SVD-with-Photonic-Chips/tree/main/Data}.



\newpage

\appendix
\section{Details on runtime evaluation}
\label{sec:OperationCountsAndRelativeTime}

\subsection{Operation counts of algorithms}
\label{sec:OperationsCountsAlgorithm}

We present here Tables \ref{tab:QR-SVD-OPS}--\ref{tab:GRK-SVD-OPS-Hybrid}. When counting the number of operations, we used the following conventions. 

We count subtraction and multiplication by $-1$ as single additions. We neither count operations to calculate indices nor memory operations to retrieve the non-cached data from a separate memory location. 
While the time costs for memory calls are relevant for all implementations, omitting them is favorable for the digital implementations since the hybrid system is expected to require less digital memory operations for performing calculations. 
Finally, we do not parallelize the multiplication of a bidiagonal matrix with a Givens rotation in the hybrid system since it only requires $4$--$6$ operations of the digital controller.

Due to the travelling time of light there is a small dependence of the time estimates in Table \ref{tab:OPS-SAMARTH} on the size of the PIC. We implicitly fixed the PIC length in our time estimates.

Finally, we assume that the digital system uses a single-core CPU and a GPU with potentially infinitely many cores, i.e., one GPU operation corresponds to an arbitrary number of elementary operations that are fully parallelizable. In contrast, the hybrid system uses a single-core CPU and a single PIC of the respective input dimension $n$.

\begin{table}[t]
\centering
\begin{tabular}{|l|p{4cm}|p{4cm}|p{4cm}|}
\hline
  Operation & Count (total) & Count (parallelized) & actual GPU count \\ \hline
  Addition & $(m^3n +m^3 +2m^2n -\frac{1}{3}mn^3 +\frac{19}{3}mn -m +\frac{2}{3}n^4 +\frac{5}{3}n^3 +\frac{1}{3}n^2 +\frac{19}{3}n -7 )C$ & $(3mn +5n -4 )C$ & $(m^3n +m^3 +2m^2n -\frac{1}{3}mn^3 +\frac{10}{3}mn -m +\frac{2}{3}n^4 +\frac{5}{3}n^3 +\frac{1}{3}n^2 +\frac{4}{3}n -3 )C$\\
  Addition (GPU) & - & $(2mn +m +9n -5 )C$ & - \\
  Multiplication & $(m^3n +m^3 +3m^2n -\frac{1}{3}mn^3 -mn^2 +\frac{16}{3}mn -m +\frac{2}{3}n^4 +\frac{7}{3}n^3 +\frac{1}{3}n^2 +\frac{8}{3}n -5 )C$ & $(3mn +3n -3 )C$ & $(m^3n +m^3 +3m^2n -\frac{1}{3}mn^3 -mn^2 +\frac{7}{3}mn -m +\frac{2}{3}n^4 +\frac{7}{3}n^3 +\frac{1}{3}n^2 -\frac{1}{3}n -2 )C$ \\
  Multiplication (GPU) & - & $(2mn +m +7n -4 )C$ & -\\
  Division & $(2n -1 )C$ & $(2n -1 )C$ & - \\
  Square root & $(4n -2 )C$ & $(4n -2 )C$ & - \\
  \hline
\end{tabular}
\caption{Operation counts for QR-SVD. Total count corresponds to QR-SVD (D-SC), parallelized count corresponds to QR-SVD (D-MC). The actual GPU count shows how many of the corresponding operation are actually performed on the GPU in the D-MC implementation, which is relevant when estimating the energy consumption.}
\label{tab:QR-SVD-OPS}
\end{table}

\begin{table}[t]
\centering
\begin{tabular}{|l|p{5cm}|}
\hline
  Operation & Count  \\ \hline
  Addition & $(5mn-5n)C$ \\
  Multiplication & $(4mn-4n)C$ \\
  Division & $(3mn-3n)C$ \\
  Square root & $(2mn-2n)C$ \\
  PIC Configuration & $(m+n)C$ \\
  PIC Operation & $(m^2 +2mn +n^2 )C$ \\
  \hline
\end{tabular}
\caption{Operation counts for QR-SVD (H).}
\label{tab:QR-SVD-OPS-Hybrid}
\end{table}


\begin{table}[t]
\centering
\begin{tabular}{|l|p{5cm}|p{6cm}|}
\hline
  Operation & Count in Bidiag-step & Count in QR-Iter/Chasing \\ \hline
  Addition & $m^3n +2m^2n -\frac{1}{3}mn^3 -mn^2 +\frac{19}{3}mn -m +\frac{2}{3}n^4 -\frac{1}{3}n^3 -\frac{2}{3}n^2 +\frac{4}{3}n -8 $ & $(2n-2)mC + (2n^2+17n-16)C$ \\
  Multiplication & $m^3n +3m^2n -\frac{1}{3}mn^3 -2mn^2 +\frac{16}{3}mn -m +\frac{2}{3}n^4 +\frac{1}{3}n^3 -\frac{5}{3}n^2 -\frac{1}{3}n -5$ & $(4n-4)mC + (4n^2+24n-27)C$ \\
  Division & $2n-2$ & $(4n-3)C$ \\
  Square root & $4n-4$ & $(2n-1)C$ \\
  \hline
\end{tabular}
\caption{Operation counts for GRK-SVD (D-SC).}
\label{tab:GRK-SVD-OPS-DSC}
\end{table}

\begin{table}[t]
\centering
\begin{tabular}{|l|p{5cm}|p{6cm}|}
\hline
  Operation & Count in Bidiag-step & Count in QR-Iter/Chasing \\ \hline
  Addition & $3mn+2n-5$ & $7nC$ \\
  Addition (GPU) & $2mn + 6n -8$ & $(4n-4)C$ \\
  Multiplication & $3nm-3$ & $(4n +5 )C$ \\
  Multiplication (GPU) & $2mn +4n -6$ & $(8n -8 )C$ \\
  Division & $2n-2$ & $(4n-3)C$ \\
  Square root & $4n-4$ & $(2n-1)C$ \\
  \hline
\end{tabular}
\caption{Operation counts for GRK-SVD (D-MC).}
\label{tab:GRK-SVD-OPS-DMC}
\end{table}

\begin{table}[t]
\centering
\begin{tabular}{|l|p{5cm}|p{6cm}|}
\hline
  Operation & Count in Bidiag-step & Count in QR-Iter/Chasing \\ \hline
  Addition & $5mn -10n +5$ & $(18n -16 )C$ \\
  Multiplication & $4mn -8n +4 $ & $(28n -27 )C$ \\
  Division & $3nm-6n+3$ & $(4n-3)C$ \\
  Square root & $2nm-4n+2$ & $(2n-1)C$ \\
  PIC Configuration & $2n$ & $2C$ \\
  PIC Operation & $2mn+2n^2$ & $(m+n)C$ \\
  \hline
\end{tabular}
\caption{Operation counts for GRK-SVD (H).}
\label{tab:GRK-SVD-OPS-Hybrid}
\end{table}

\begin{table}[t]
\centering
\begin{tabular}{|l|p{5cm}|p{6cm}|}
\hline
  Operation & actual GPU count in Bidiag-step & actual GPU count in QR-Iter/Chasing \\ \hline
  Addition & $m^3n +2m^2n -\frac{1}{3}mn^3 -mn^2 +\frac{10}{3}mn -m +\frac{2}{3}n^4 -\frac{1}{3}n^3 -\frac{2}{3}n^2 -\frac{2}{3}n -3 $ & $(2n-2)mC + (2n^2+10n-16)C$ \\
  Multiplication & $m^3n +3m^2n -\frac{1}{3}mn^3 -2mn^2 +\frac{7}{3}mn -m +\frac{2}{3}n^4 +\frac{1}{3}n^3 -\frac{5}{3}n^2 -\frac{1}{3}n -2$ & $(4n-4)mC + (4n^2+20n-32)C$ \\
  \hline
\end{tabular}
\caption{Operations actually performed by the GPU for GRK-SVD (D-MC).}
\label{tab:GRK-SVD-OPS-GPU}
\end{table}

\subsection{Computation of relative runtime for PIC}
\label{sec:RelativeRuntime}

To estimate the relative runtime of PIC Configuration and PIC Operation, we use the absolute time estimates provided in Table \ref{tab:OPS-SAMARTH}. PIC Operation, i.e., computing a single matrix-vector product between $\bU$ encoded on PIC and an input vector $\bv$, consists of one encoding step, one decoding step and the data throughput time needed for light to pass through the circuit. In our experiments, we use the average times for operations to calculate the total runtime. For comparison, Table \ref{tab:OPS-SAMARTH} also provides more optimistic time estimates that should be achievable in the future. 

\begin{table}[t]
\centering
\begin{tabular}{|l|p{3cm}|p{2cm}|}
\hline
  Operation & time (average) & time (fast) \\ \hline
  PIC Configuration & 2.5 ms & 1 ns \\
  PIC Encoding/Decoding & 1 ns & 10 ps \\
  PIC Operation & 12.5 ns & 1ns \\
  \hline
\end{tabular}
\caption{Time cost estimates for different operations on a photonic integrated circuit. Configuration time refers to programming a new matrix $\bU$ on PIC. Operation time covers the time data needs for passing through the entire PIC, including encoding, time-of-flight latency and decoding. Average times are conservative estimates based on thermo-optic configuration and electro-optic encoding while fast time estimations are based on the state-of-the-art PIC implementations.
}

\label{tab:OPS-SAMARTH}
\end{table}

\section{Pseudocode for all presented algorithms}

We present here the detailed pseudocode for all algorithms discussed in Sections \ref{sec:ComputingSVD} and \ref{sec:Experiments}:
\begin{itemize}
    \item The basic QR-decomposition (Algorithm \ref{alg:QR_Decomposition})
    \item The GRK-SVD bidiagonalization step (Algorithm \ref{alg:rgkBDG})
    \item The full GRK-SVD algorithm (Algorithm \ref{alg:rgkSVD})
    \item The QR-decomposition optimized for the hybrid system (Algorithm \ref{alg:PhotonicQRDecomposition})
    \item The GRK-SVD bidiagonalization step optimized for the hybrid system (Algorithm \ref{alg:rgkSVDforChip})
    \item The full GRK-SVD algorithm optimized for the hybrid system (Algorithm \ref{alg:PhoRGKSVD})
\end{itemize}

\begin{algorithm}[t] 
	\caption{\textbf{:}  \textbf{QR-decomposition}} \label{alg:QR_Decomposition}
	\begin{algorithmic}[1]
		\Require{$\mathbf A = \mathbf A^{(0)} \in \mathbb R^{m\times n}$}
		\Statex
        \State $\mathbf Q^{(0)} = \mathbf I_{m\times m}$
        \State Initialize $k=1$
        \While{$k \le \min\{m,n\}$}
		\State Define $\mathbf a = (A^{(k-1)}_{k,k}, A^{(k-1)}_{k+1,k}, \dots, A^{(k-1)}_{m,k})^\top \in \mathbb R^{m-k+1}$, i.e., $\mathbf a$ is the $k$-th column of $\mathbf A^{(k-1)}$ below the diagonal 
        \State Define the unitary matrix
        \begin{align*}
            \mathbf U^{(k)} =
            \begin{bmatrix}
                \mathbf I_{(k-1) \times (k-1)} & \boldsymbol 0 \\
                \boldsymbol 0 & \mathbf U_{\mathbf a - \| \mathbf a \|_2 \mathbf e_1}
            \end{bmatrix},
        \end{align*}
        where $\mathbf U_{\mathbf v} = \mathbf I_{(m - k + 1) \times (m - k + 1)} - \frac{2}{\| \mathbf v \|_2^2} \mathbf v \mathbf v^\top$, for $\mathbf v \in \mathbb R^d$, and $\mathbf e_1 \in \mathbb R^{m-k+1}$ denotes the first unit vector.
        \State $\mathbf A^{(k)} = \mathbf U^{(k)} \cdot \mathbf A^{(k-1)}$
        \State $\mathbf Q^{(k)} = \mathbf Q^{(k-1)} \cdot \mathbf U^{(k)}$ 
		\Let {$k$}{$k+1$}
		\EndWhile
		\Statex
		\Ensure{$\mathbf R = \mathbf A^{(\min\{m,n\})}$ (upper triangular) and $\mathbf Q = \mathbf Q^{(\min\{m,n\})}$ (unitary) with $\mathbf A = \mathbf Q \cdot \mathbf R$}
	\end{algorithmic}

\end{algorithm}

\begin{algorithm}[t]
    \caption{\textbf{:}  \textbf{Golub-Reinsch-Kahan-Bidiagonalization}}
    \label{alg:rgkBDG}
    \begin{algorithmic}[1]
        \Require{$\mathbf A \in \mathbb R^{m\times n}$ with $m \geq n$}
		\Statex
        \State $\mathbf B^{(0)} = \mathbf A$
        \State $\mathbf P^{(0)} = \mathbf I_{m\times m}$
        \State $\mathbf Q^{(0)} = \mathbf I_{n\times n}$
        \State Initialize $k=1$
        \While{$k \le n$}
        \State Define $\mathbf a = (\mathbf B^{(k-1)}_{k,k}, \mathbf B^{(k-1)}_{k+1,k}, \dots, \mathbf B^{(k-1)}_{m,k})^\top \in \mathbb R^{m-k+1}$, i.e., $\mathbf a$ is the $k$-th column of $\mathbf B^{(k-1)}$ below the diagonal, and the unitary matrix
        \begin{align*}
            \mathbf U^{(k)} =
            \begin{bmatrix}
                \mathbf I_{(k-1) \times (k-1)} & \boldsymbol 0 \\
                \boldsymbol 0 & \mathbf U_{\mathbf a - \| \mathbf a \|_2 \mathbf e_1}
            \end{bmatrix},
        \end{align*}
        where $\mathbf U_{\mathbf v} = \mathbf I_{(m - k + 1)\times (m - k + 1)} - \frac{2}{\| \mathbf v \|_2^2} \mathbf v \mathbf v^\top$, for $\mathbf v \in \mathbb R^d$, and $\mathbf e_1 \in \mathbb R^{m-k+1}$ denotes the first unit vector.
        
        \State $\mathbf{\tilde{B}}^{(k)} = \mathbf U^{(k)} \cdot \mathbf B^{(k-1)}$
        \State $\mathbf P^{(k)} = \mathbf P^{(k-1)} \cdot \mathbf U^{(k)}$ 
        
        \State Define $\mathbf b = (B^{(k-1)}_{k,k + 1}, B^{(k-1)}_{k,k + 2}, \dots, B^{(k-1)}_{k,n})^\top \in \mathbb R^{n - k}$, i.e., $\mathbf b$ is the $k$-th row of $\mathbf B^{(k-1)}$ to the right of the superdiagonal. If $k=n$, set $\mathbf V^{(k)} = \mathbf I_{n\times n}$ and skip the calculations. Define the unitary matrix
        \begin{align*}
            \mathbf V^{(k)} =
            \begin{bmatrix}
                \mathbf I_{k \times k} & \boldsymbol 0 \\
                \boldsymbol 0 & \mathbf V_{\mathbf b - \| \mathbf b \|_2 \mathbf e_1}
            \end{bmatrix},
        \end{align*}
        where $\mathbf V_{\mathbf v} = \mathbf I_{(n-k)\times (n-k)} - \frac{2}{\| \mathbf v \|_2^2} \mathbf v \mathbf v^\top$, for $\mathbf v \in \mathbb R^{n-k}$, and $\mathbf e_1 \in \mathbb R^{n-k}$ denotes the first unit vector.
        
        \State $\mathbf B^{(k)} = \mathbf{\tilde{B}}^{(k)} \cdot \mathbf V^{(k)}$
        \State $\mathbf Q^{(k)} = \mathbf V^{(k)} \cdot \mathbf Q^{(k-1)}$ 
		\Let {$k$}{$k+1$}
		\EndWhile
		\Statex
		\Ensure{$\mathbf B = \mathbf B^{(n)}$ (bidiagonal), $\mathbf P = \mathbf P^{(n)}$ (unitary) and $\mathbf Q = \mathbf Q^{(n)}$ (unitary) with $\mathbf A = \mathbf P \cdot \mathbf B \cdot \mathbf Q$}
    \end{algorithmic}
    
\end{algorithm}

\begin{algorithm}[t]
    \caption{\textbf{:}  \textbf{GRK-SVD (Golub, Reinsch, Kahan)}}
    \label{alg:rgkSVD}
    \begin{algorithmic}[1]
        \Require{$\mathbf A \in \mathbb R^{m\times n}$ with $m \geq n$}
		\Statex
        \State Decompose $\mathbf A = \mathbf P \cdot \mathbf B \cdot \mathbf Q$ via Algorithm \ref{alg:rgkBDG}
        \State Set $\mathbf U^{(0)} = \mathbf P$
        \State Set $\mathbf V^{(0)} = \mathbf Q$
        \State Set $\delta = \epsilon_0 \lVert \mathbf B\rVert_\infty$ where $\epsilon_0$ denotes machine precision
        \While{$\lVert \mathbf B - $diag($\mathbf B$)$ \rVert_\infty > \delta$}
            \State Set the Wilkinson Shift $s$ as the dominant eigenvalue of the bottom right 2x2 minor of $\mathbf B^T \mathbf B$:
            \begin{align*}
                s = \max(\sigma(\mathbf{M})), \quad \text{for} \quad \mathbf M =
                \begin{bmatrix}
                    (\mathbf B_{n-2, n-1})^2+(\mathbf B_{n-1, n-1})^2   & \mathbf B_{n-1, n-1}\mathbf B_{n-1, n}   \\
                    \mathbf B_{n-1, n-1}\mathbf B_{n-1, n} & (\mathbf B_{n-1, n})^2+(\mathbf B_{n, n})^2
                \end{bmatrix},
            \end{align*}
            \State Set the desired ratio $r_1$ of the first two entries in the first column of $\mathbf B^T \mathbf B - s \mathbf I_{n\times n}$ to
            \begin{align*}
                r_1 = \frac{\mathbf B_{1,2}\mathbf B_{1,1}}{(\mathbf B_{1,1})^2-s}
            \end{align*}
            \State Construct the first Givens-Rotation $\mathbf R^{(1)}$ as $\mathbf R^{(1)} = \hat{\mathbf G}_{1,n}(r_1)$
            
            \State Set $\tilde{\mathbf D}^{(1)} = \mathbf B \cdot \mathbf R^{(1)}$
            \Let{$\mathbf V^{(1)}$}{$(\mathbf R^{(1)})^\top \cdot \mathbf V^{(0)}$}
            \State Initialize $k = 1$
            \While{$k \leq n-2$}
                \State Set $l_k = -\frac{\tilde{\mathbf D}^{(k)}_{k+1,k}}{\tilde{\mathbf D}^{(k)}_{k,k}}$
                \State Construct Givens-Rotation $\mathbf L^{(k)}$ as $\mathbf L^{(k)} = \hat{\mathbf G}_{k, m}(l_k)$
                
                \Let{$\mathbf D^{(k)}$}{$\mathbf L^{(k)} \cdot \tilde{\mathbf D}^{(k)}$}
                \Let{$\mathbf U^{(k)}$}{$\mathbf U^{(k-1)} \cdot (\mathbf L^{(k)})^\top$}

                \State Set $r_{k+1} = \frac{\mathbf D^{(k)}_{k,k+2}}{\mathbf D^{(k)}_{k,k+1}}$
                \State Construct Givens-Rotation $\mathbf R^{(k+1)}$ as $\mathbf R^{(k+1)} = \hat{\mathbf G}_{k+1, n}(r_{k+1})$

                \Let{$\tilde{\mathbf D}^{(k+1)}$}{$\mathbf D^{(k)} \cdot \mathbf R^{(k+1)}$}
                \Let{$\mathbf V^{(k)}$}{$(\mathbf R^{(k+1)})^\top \cdot \mathbf V^{(k-1)}$}

                \Let{k}{k+1}
            \EndWhile
            
            \State Set $l_{n-1} = -\frac{\mathbf B_{n,n-1}}{\mathbf B_{n-1,n-1}}$
            \State Construct the last Givens-Rotation $\mathbf L^{(n-1)}$ as $\mathbf L^{(n-1)} = \hat{\mathbf G}_{n-1, m}(l_{n-1})$
            
            \Let{$\mathbf D^{(n-1)}$}{$\mathbf L^{(n-1)} \cdot \tilde{\mathbf D}^{(n-1)}$}
            \Let{$\mathbf U^{(n-1)}$}{$\mathbf U^{(n-1)} \cdot (\mathbf L^{(n-1)})^\top$}
            
            \Let{$\mathbf B$}{$\mathbf D^{(n-1)}$}
            
            \If{a diagonal element of $\mathbf B$ is zero}
                \State Use rotation step (cf. \cite{Golub1971} section 1.4 for details)
            \EndIf
            \If{a super diagonal element of $\mathbf B$ is zero}
                \State Proceed independently with two separate sub-matrices 
            \EndIf
        \EndWhile

        \Ensure{$\mathbf \Sigma = \mathbf B$ (diagonal), $\mathbf U = \mathbf Q$ (unitary) and $\mathbf V = \mathbf P^T$ (unitary) with $\mathbf A = \mathbf U \cdot \mathbf \Sigma \cdot \mathbf V^T$}
    \end{algorithmic}
\end{algorithm}

\begin{algorithm}[t] 
	\caption{\textbf{:}  \textbf{Photonics optimized QR-Decomposition}} 
    \label{alg:PhotonicQRDecomposition}
	\begin{algorithmic}[1]
		\Require{$\mathbf A \in \mathbb R^{n\times n}$ with $m \geq n$}
		\Statex
        
        \State $\mathbf A^{(0)} = \mathbf A$
        \State $\mathbf Q^{(0)} = \mathbf I_{m\times m}$
        \State Initialize $k=1$
        \While{$k \le n$}
            \State Reset PIC
            \State Initialize $l = m$
            \State Initialize $d = \mathbf A^{(k-1)}_{m,k}$
            \While{$l > k$}
                \State Compute $r_{k,l} = -\frac{d}{\mathbf A^{(k-1)}_{l-1,k}}$
                \State Program PIC to encode $\hat{\mathbf G}^n_{l - 1, l}(r_{k,l})$ at $1, l - 1$, cf.\ Figure \ref{fig:encoding_bidiag}
                \State Set $d = \frac{\mathbf A^{(k-1)}_{l-1,k} - r_{k,l} d}{\sqrt{1+r_{k,l}^2}}$
                \Let {$l$}{$l-1$}
            \EndWhile
            \State Calculate $\mathbf A^{(k)}$ by passing $\mathbf A^{(k-1)}$ column-wise through PIC
            \State Calculate $(\mathbf Q^{(k)})^T$ by passing $(\mathbf Q^{(k-1)})^T$ column-wise through PIC
        \EndWhile
		\Statex
		\Ensure{$\mathbf R = \mathbf A^{(n)}$ (upper triangular) and $\mathbf Q = \mathbf Q^{(n)}$ (unitary) with $\mathbf A = \mathbf Q \cdot \mathbf R$}
	\end{algorithmic}
\end{algorithm}

\begin{algorithm}[t] 
	\caption{\textbf{:}  \textbf{Photonics optimized GRK Bidiagonalization}} 
    \label{alg:rgkSVDforChip}
	\begin{algorithmic}[1]
		\Require{$\mathbf A \in \mathbb R^{n\times n}$ with $m \geq n$}
		\Statex
        
        \State $\mathbf B^{(0)} = \mathbf A$
        \State $\mathbf P^{(0)} = \mathbf I_{m\times m}$
        \State $\mathbf Q^{(0)} = \mathbf I_{n\times n}$
        \State Initialize $k=1$
        \While{$k \le n$}
            \State Reset PIC
            \State Initialize $l = m$
            \State Initialize $d = \mathbf B^{(k-1)}_{m,k}$
            \While{$l > k$}
                \State Compute $r_{k,l} = -\frac{d}{\mathbf B^{(k-1)}_{l-1,k}}$
                \State Program PIC to encode $\hat{\mathbf G}_{l - 1, l}^m(r_{k,l})$ at $1, l - 1$, cf.\ Figure \ref{fig:encoding_bidiag}
                \State Set $d = \frac{\mathbf B^{(k-1)}_{l-1,k} - r_{k,l} d}{\sqrt{1+r_{k,l}^2}}$
                \Let {$l$}{$l-1$}
            \EndWhile
            \State Calculate $\tilde{\mathbf B}^{(k)}$ by passing $\mathbf B^{(k-1)}$ column-wise through PIC
            \State Calculate $(\mathbf P^{(k)})^T$ by passing $(\mathbf P^{(k-1)})^\top$ column-wise through PIC
            \State Reset PIC
            \State Initialize $l=n$
            \State Set $d = \mathbf B^{(k-1)}_{k,n}$
            \While{$l > k+1$}
                \State Compute $s_{k,l} = \frac{d}{\mathbf B^{(k-1)}_{k,l-1}}$
                \State Program PIC to encode $\hat{\mathbf G}_{l - 1, l}^n(s_{k,l})$ at $1, l - 1$
                \State Set $d = \frac{\mathbf B^{(k-1)}_{k, l-1} - s_{k,l} d}{\sqrt{1+s_{k,l}^2}}$
                \Let {$l$}{$l-1$}
            \EndWhile
            \State Calculate $(\mathbf B^{(k)})^\top$ by passing $(\tilde{\mathbf B}^{(k)})^\top$ column-wise through PIC
            \State Calculate $\mathbf Q^{(k)}$ by passing $\mathbf Q^{(k-1)}$ column-wise through PIC
            \Let {$k$}{$k+1$}
        \EndWhile
		\Statex
		\Ensure{$\mathbf B = \mathbf B^{(n)}$ (bidiagonal), $\mathbf Q = \mathbf Q^{(n)}$ (unitary) and $\mathbf P = \mathbf P^{(n)}$ (unitary) with $\mathbf A = \mathbf P \cdot \mathbf B \cdot \mathbf Q$}
	\end{algorithmic}
\end{algorithm}

\begin{algorithm}[t]
    \caption{\textbf{:}  \textbf{Photonics optimized GRK-SVD (GRK-SVD H)}}
    \label{alg:PhoRGKSVD}
    \begin{algorithmic}[1]
        \Require{$\mathbf A \in \mathbb R^{m\times n}$ with $m \geq n$. (Note that this Algorithm needs the PIC in a flipped configuration, such that the first two input channels interact first.)}
		\Statex
        \State Decompose $\mathbf A = \mathbf Q \cdot \mathbf B \cdot \mathbf P$ via Algorithm \ref{alg:rgkSVDforChip}
        \State Set $\delta = \epsilon_0 \lVert \mathbf B\rVert_\infty$ where $\epsilon_0$ is the machine precision
        \While{$\lVert \mathbf B - $diag($\mathbf B$)$ \rVert_\infty > \delta$}
            \State Set the Wilkinson Shift $s$ as the dominant eigenvalue of the bottom right 2x2 minor of $\mathbf B^T \mathbf B$:
            \begin{align*}
                s = \max(\sigma(\mathbf{M})), \quad \text{for} \quad \mathbf M =
                \begin{bmatrix}
                    \mathbf B_{n-2, n-1}^2+\mathbf B_{n-1, n-1}^2   & \mathbf B_{n-1, n-1}\mathbf B_{n-1, n}   \\
                    \mathbf B_{n-1, n-1}\mathbf B_{n-1, n} & \mathbf B_{n-1, n}^2+\mathbf B_{n, n}^2
                \end{bmatrix},
            \end{align*}
            \State Set the desired ratio $r_1$ of the first two entries in the first column of $\mathbf B^T \mathbf B - s \mathbf I_{n \times n}$ to
            \begin{align*}
                r_1 = \frac{\mathbf B_{1,2}\mathbf B_{1,1}}{\mathbf B_{1,1}^2-s}
            \end{align*}
            \State Construct the first Givens-Rotation $\mathbf R^{(1)}$ as $\mathbf R^{(1)} = \hat{\mathbf G}_{1,n}(r_1)$
            
            \State Set $\tilde{\mathbf D}^{(1)} = \mathbf B \cdot \mathbf R^{(1)}$
            \State Initialize $k = 1$
            \While{$k \leq n-2$}
                \State Set $l_k = -\frac{\tilde{\mathbf D}^{(k)}_{k+1,k}}{\tilde{\mathbf D}^{(k)}_{k,k}}$
                \State Construct Givens-Rotation $\mathbf L^{(k)}$ as $\mathbf L^{(k)} = \hat{\mathbf G}_{k, m}(l_k)$
                
                \Let{$\mathbf D^{(k)}$}{$\mathbf L^{(k)} \cdot \tilde{\mathbf D}^{(k)}$}

                \State Set $r_{k+1} = \frac{\mathbf D^{(k)}_{k,k+2}}{\mathbf D^{(k)}_{k,k+1}}$
                \State Construct Givens-Rotation $\mathbf R^{(k+1)}$ as $\mathbf R^{(k+1)} = \hat{\mathbf G}_{k+1, n}(r_{k+1})$

                \Let{$\tilde{\mathbf D}^{(k+1)}$}{$\mathbf D^{(k)} \cdot \mathbf R^{(k+1)}$}

                \Let{k}{k+1}
            \EndWhile
            
            \State Set $l_{n-1} = -\frac{\mathbf B_{n,n-1}}{\mathbf B_{n-1,n-1}}$
            \State Construct the last Givens-Rotation $\mathbf L^{(n-1)}$ as $\mathbf L^{(n-1)} = \hat{\mathbf G}_{n-1, m}(l_{n-1})$

            \Let{$\mathbf D^{(n-1)}$}{$\mathbf L^{(n-1)} \cdot \tilde{\mathbf D}^{(n-1)}$}

            \State Encode all inverses of the matrices $\mathbf{R}^{(k)}$ on the PIC, where $(\mathbf{R}^{(k)})^T$ is put at $1, k$ 
            \State Calculate $\tilde{\mathbf{P}}$ by passing $\mathbf{P}$ column-wise through PIC which is now configured as in Figure \ref{fig:encoding_qriteration}
            
            \State Encode all inverses of the matrices $\mathbf{L}^{(k)}$ on the PIC, where $(\mathbf{L}^{(k)})^T$ is put at $1, k$ 
            \State Calculate $\tilde{\mathbf{Q}}^T$ by passing $\mathbf{Q}^T$ column-wise through PIC, cf.\ Figure \ref{fig:encoding_qriteration}
            
            \Let{$\mathbf{P}$, $\mathbf{Q}$}{$\tilde{\mathbf{P}}$, $\tilde{\mathbf{Q}}$}

            \Let{$\mathbf B$}{$\mathbf D^{(n-1)}$}
            
            \If{a diagonal element of $\mathbf B$ is zero}
                \State Use rotation step (cf. \cite{Golub1971} section 1.4 for details)
            \EndIf
            \If{a super diagonal element of $\mathbf B$ is zero}
                \State Proceed independently with two separate sub-matrices 
            \EndIf
        \EndWhile

        \Ensure{$\mathbf \Sigma = \mathbf B$ (diagonal), $\mathbf U = \mathbf Q$ (unitary) and $\mathbf V = \mathbf P^T$ (unitary) with $\mathbf A = \mathbf U \cdot \mathbf \Sigma \cdot \mathbf V^T$}
    \end{algorithmic}
    
\end{algorithm}


\bibliography{references}
\bibliographystyle{plain}

\end{document}